\renewcommand*{\backref}[1]{}
\renewcommand*{\backrefalt}[4]{%
    \ifcase #1 (Not cited.)%
    \or        (Cited on page~#2.)%
    \else      (Cited on pages~#2.)%
    \fi}
\newtheorem{theorem}{Theorem}[section]
\newtheorem{corollary}[theorem]{Corollary}
\newtheorem{lemma}[theorem]{Lemma}
\newtheorem{definition}{Definition}[section]
\newtheorem{proposition}[theorem]{Proposition}
\newtheorem{example}{Example}[section]
\newcommand{\R}{\mathbb{R}}
\newcommand{\muu}{\text{\sfrac{1}{2}}}
\newcommand{\muuu}{\frac{\mathbb{1}}{\mathbb{2}}}
 \author{J. P. Fatelo and N. Martins-Ferreira}
 \address{School of Technology and Management, Centre for Rapid and Sustainable Product Development - CDRSP, Polytechnic Institute of Leiria, P-2411-901 Leiria, Portugal.}
 \email{martins.ferreira@ipleiria.pt}
\title[]{Mobility spaces and their geodesic paths}
 \subjclass[2010]{Primary 08A99, 03G99; Secondary 20N99, 08C15}
 \keywords{Mobility algebra, mobility space, affine space, affine mobility space, unit interval, ternary operation, geodesic path}
\thanks{
This work is supported by the Fundação para a Ciência e a Tecnologia (FCT) and Centro2020 through the Project references: UID/Multi/04044/2019; PAMI - ROTEIRO/0328/2013 (Nº 022158); Next.parts (17963), and also by CDRSP and ESTG from the Polytechnic Institute of Leiria.
}
\begin{document}

\begin{abstract}

We introduce an algebraic system which can be used as a model for spaces with geodesic paths between any two of their points. This new algebraic structure is based on the notion of mobility algebra which has recently been introduced as a model for the unit interval of real numbers. Mobility algebras consist on a set $A$ together with three constants and a ternary operation. In the case of the closed unit interval $A=[0,1]$, the three constants are 0, 1 and 1/2 while the ternary operation is $p(x,y,z)=x-yx+yz$. A mobility space is a set $X$ together with a map $q\colon{X\times A\times X\to X}$ with the meaning that $q(x,t,y)$ indicates the position of a particle moving from point $x$ to point $y$ at the instant $t\in A$, along a geodesic path within the space $X$. A mobility space is thus defined with respect to a mobility algebra, in the same way as a module is defined over a ring.
We introduce the axioms for mobility spaces, investigate the main properties and give examples.
    We also establish the connection between the algebraic context and the one of spaces with geodesic paths. The connection with affine spaces is briefly mentioned.

\end{abstract}

\maketitle

\today; \currenttime

\section{Introduction}

The purpose of this work is to introduce an algebraic system which can be used to model spaces with geodesics. The main idea stems from the interplay between algebra and geometry. In Euclidean geometry the notion of affine space is well suited for this purpose. Indeed, in an affine space we have scalar multiplication, addition and subtraction and so it is possible to parametrize, for any instant $t\in [0,1]$, a straight line between points $x$ and $y$ with the formula $(1-t)x+ty$. Such a line is clearly a geodesic path from $x$ to $y$. 
In general terms, we may use an operation $q=q(x,t,y)$ to indicate the position, at an instant $t$, of a particle moving in space from a point $x$ to a point $y$. If the particle is moving along a geodesic path then this operation must certainly verify some conditions. The aim of this project is to present an algebraic structure, $(X,q)$, with axioms that are verified by any operation $q$ representing a geodesic path in  a space between any two of its points. 

First results concerning this investigation were presented in \cite{ccm_magmas} where a binary operation, obtained by fixing $t$ to a value that positions the particle at { half way} from $x$ to $y$, is studied. The whole movement of a particle on a geodesic path is captured when the variable $t$ is allowed to range over a set of values, of which the unit interval is the most natural choice. The investigation of those structures and properties relevant to our study led us to the discovery of a new algebraic structure that was called mobi algebra \cite{mobi}.
A mobility algebra (or mobi algebra), besides being a suitable algebraic model to the unit interval, offers an interesting comparison with rings. A slogan may be used to illustrate that comparison:  \emph{a mobi algebra is to the unit interval in the same way as a ring is to the set of reals}. A mobi algebra is an algebraic system $(A,p,0,\muu,1)$ consisting of a set together with a ternary operation $p$ and three constants. Moreover, every ring in which $2$ is invertible has a mobi algebra structure, while a mobi algebra in which the element $\muu$ is invertible (as defined in~\cite{mobi}) has  a ring structure.

Following the analogy with rings, and extending it to modules over a ring, we have considered a new structure, called  mobility space (mobi space for short). If $A$ is a mobi (or mobility) algebra then a mobi space, say $(X,q)$, is defined over a mobi algebra in the sense that $q=q(x,t,y)$ operates on $x,y\in X$ and $t\in A$. The axioms defining a mobi space are similar to the ones defining a mobi algebra and examples show that these axioms are indeed appropriate to capture the features of spaces with geodesic paths between any two of their points.

This paper is organized as follows. The notion of  mobi algebra \cite{mobi} is recalled together with its derived operations (Section \ref{sec_mobi_alg}). Two important new sorts of examples are added to the  list presented in \cite{mobi}. A first sort of examples is constructed (Proposition~\ref{mobiEnd}) from a midpoint algebra by considering a suitable subset of its endomorphisms (Definition \ref{End_e}). Another sort of examples  is generated by considering a unitary ring in which the element 2 is invertible. As  a particular case, the set of endomorphisms of an abelian group is canonically equipped with a unitary ring structure and hence, if the map $x\mapsto 2x$ is invertible, then it gives rise to a mobi algebra structure on its set of endomorphisms (Proposition \ref{prop_endX}). These examples will be used  to characterize affine mobi spaces which include, among other things, all vector spaces over a field and will be considered in a sequel of this work. Nonetheless, our last section is dedicated to a thoroughly comparison between affine mobi spaces and modules over a ring (Theorem \ref{module2mobi} and Theorem \ref{mobi2module}). In addition, some details are given in the form of examples in subsection \ref{sec_example-midpoint}.

 Section \ref{sec_mobi_space} is totally devoted to the definition of mobi space and a short list of its properties that will be needed in this paper. Further studies are postponed to a future work, namely the transformations between such structures, and the fact that the category of mobi spaces is a weakly Mal'tsev category \cite{ccm_magmas,NMF.08,NMF.15a}. Or its comparison with the work of A. Kock on affine connections with neighbouring relations \cite{Kock,Kock3,Kock2} and its relation to the work of Buseman  \cite{busemann-1943,busemann-1955} on spaces with unique geodesic paths. 
 
 Section 4 is dedicated to an exhaustive list of worked examples and counter examples that have been used for testing the strength of our axioms. These include simple examples (Example \ref{ex1to4}), more complex examples (Example \ref{ex5to9}), the particular case of affine mobi spaces (which are comparable to euclidean geometry) and some examples with a physical interpretation (Examples \ref{ex11} and \ref{ex:fxdot}). In particular, we observe that even for other type of physical equations, rather than those for geodesics, there is a procedure which can be followed to include physical phenomenon into a mobility space. The procedure is based on the simple trick of considering time as a geometrical dimension (see Example \ref{ex:fxdot} and Corollary \ref{corollary-xdot}). 

 In Section \ref{geo} we study the main examples which arise as the solution to the equation for geodesics and extend that result to cover a wide range of cases in which geodesics are not uniquely determined. This is done via the process of identification spaces.
In the end we have added a section with a comparison between affine mobi spaces and modules over a unitary ring in which the element $2$ is invertible.

\section{Mobi algebra}\label{sec_mobi_alg}

In this section we briefly recall the notion of mobi algebra which was introduced in \cite{mobi} and some of its basic properties. A mobi algebra consists of a set equipped with three constants and one ternary operation. The unit interval $[0,1]$, with the three constants $0$, $\frac{1}{2}$, $1$ and  the formula $p(a,b,c)=a+bc-ba$ has always been the example of  a mobi algebra which have driven our intuition. Nevertheless, several other examples  of very different nature were presented in  \cite{mobi}. At the end of this section we add two new important classes to the list of examples. As we will see in the next section (while introducing the notion of mobi space) a mobility algebra plays the role of scalars (for a mobility space) in the same way as a ring (or a field) models the scalars for a module (or a vector space) over the base ring (or field). 

 In order to have an intuitive interpretation of its axioms we may consider a mobi algebra as a mobi space over itself and use the geometric intuition provided in section \ref{sec_mobi_space}.
Namely, that the operation $p(x,t,y)$ is the position of a particle moving from a point $x$ to a point $y$ at an instant $t$ while following a geodesic path.

\begin{definition}\cite{mobi}\label{mobi_algebra}
A mobi algebra is a system $(A,p,0,\muu,1)$, in which $A$ is a set, $p$ is a ternary operation and $0$, $\muu$ and $1$ are elements of A, that satisfies the following axioms:
\begin{enumerate}[label={\bf (A\arabic*)}]
\item\label{alg_mu} $p(1,\muu,0)=\muu$
\item\label{alg_01} $p(0,a,1)=a$
\item\label{alg_idem} $p(a,b,a)=a$
\item\label{alg_0} $p(a,0,b)=a$
\item\label{alg_1} $p(a,1,b)=b$
\item\label{alg_cancel} $p(a,\muu,b_1)=p(a,\muu,b_2)\implies b_1=b_2$
\item\label{alg_homo} $p(a,p(c_1,c_2,c_3),b)=p(p(a,c_1,b),c_2,p(a,c_3,b))$
\item\label{alg_medial}
$p(p(a_1,c,b_1),\muu,p(a_2,c,b_2))=p(p(a_1,\muu,a_2),c,p(b_1,\muu,b_2))$.
\end{enumerate}
\end{definition}

Some properties of mobi algebras can be suitably expressed in terms of a unary operation~"$\overline{()}$" and binary operations~"$\cdot$", "$\circ$" and "$\oplus$" defined as follows (see \cite{mobi} for more details).
\begin{definition}\label{binary_definition} Let $(A,p,0,\muu,1)$ be a mobi algebra. We define:
\begin{eqnarray}
\label{def_complementar}\overline{a}&=&p(1,a,0)\\
\label{def_product}a\cdot b&=&p(0,a,b)\\
\label{def_star}a\oplus b&=&p(a,\muu,b)\\
\label{def_oplus}a\circ b&=&p(a,b,1).
\end{eqnarray}
\end{definition}

We recall a list of some properties of a mobi algebra. If $(A,p,0,\muu,1)$ is a mobi algebra, then: 
\begin{eqnarray}
\label{Bmuu} \overline{\muu}&=&\muu\\
\label{B130} a\cdot\muu=\muu\cdot a&=&0\oplus a\\
\label{cancell-muu} \muu\cdot a=\muu\cdot a'&\Rightarrow& a=a'\\
\label{B7} p(\overline{a},\muu,a)&=&\muu\\
\label{Boverlinea} \overline{a}=a &\Rightarrow & a=\muu\\
\label{complementary-p} \overline{p(a,b,c)}&=&p(\overline{a},b,\overline{c})\\
\label{commut} p(c,b,a)&=&p(a,\overline{b},c)\\
\label{circ_cdot} \overline{a\circ b}&=&\overline{b}\cdot\overline{a}\\
\label{important}\muu\cdot p(a,b,c)&=&(\overline{b}\cdot a)\oplus (b\cdot c).
\end{eqnarray}

A long list of examples is provided in \cite{mobi}. The following example shows that any non-empty midpoint algebra $(X,\oplus)$, with a chosen element $e\in X$ in it, gives rise  to a mobi algebra structure on a suitable subset of endomaps of $X$. This example will be crucial for the characterization of affine mobi spaces (see Section \ref{sec_example-midpoint}).

Let us recall that a midpoint algebra \cite{Escardo} consists of a set $X$ and a binary operation 
$\oplus$ satisfying the following axioms:
\[\begin{array}{rl}
 \text{(idempotency)} & x\oplus x= x\\
 \text{(commutativity)} & x\oplus y= y \oplus x\\
 \text{(cancellation)} & x\oplus y= x' \oplus y\Rightarrow x=x'\\
\text{(mediality)}&(x\oplus y)\oplus (z\oplus w)=( x \oplus z)\oplus (y\oplus w) .
\end{array}\]
 It is remarkable that a certain subset of endomaps of $X$ can always be given the structure of a mobi algebra.

\begin{definition}\label{End_e}
Let $(X,\oplus)$ be a non-empty midpoint algebra and \mbox{$e\in X$} a fixed chosen element in $X$. We denote by $End_{e}^{\oplus}(X)$ the set of endomaps of $X$ consisting of those maps $g\colon{X\to X}$ satisfying the following conditions:
\begin{enumerate}
\item[(i)] $g(x\oplus y)=g(x)\oplus g(y)$,  for all $x,y\in X$,
\item[(ii)] $g(e)=e$
\item[(iii)] for all $x\in X$, there exists $\bar{g}(x)\in X$ such that $\bar{g}(x)\oplus g(x)=e\oplus x$
\item[(iv)] for all $x,y\in X$, there exists $\tilde{g}(x,y)\in X$ such that $\bar{g}(x)\oplus g(y)=e\oplus\tilde{g}(x,y)$
\end{enumerate}
\end{definition}

Let us observe that when $g\in End_e^{\oplus}(X)$ then $\bar{g}$, defined as in $(iii)$, is also in $End_e^{\oplus}(X)$. Moreover, we have $\bar{\bar{g}}=g$ and $\tilde{\bar{g}}(x,y)=\tilde{g}(y,x)$.
An example is provided in subsection \ref{sec_example-midpoint}, where affine mobi spaces are considered.

\begin{proposition}\label{mobiEnd}
Let $(X,\oplus)$ be a non-empty midpoint algebra and fix an element $e\in X$. The system  $(End_e^{\oplus}(X), \mathbb{p}, \mathbb{0}, \muuu,\mathbb{1})$ , with $\mathbb{p}(x)=\mathbb{p}_{f,g,h}(x)=\tilde{g}(f(x),h(x))$, $\mathbb{0}(x)=e$, $\muuu(x)=e\oplus x$ and $\mathbb{1}(x)=x$  is a mobi algebra.
\end{proposition}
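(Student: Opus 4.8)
The plan is to verify each axiom by evaluating both sides at an arbitrary point $x\in X$ and reducing to an identity in the midpoint algebra $(X,\oplus)$; the whole argument then rests on a small toolkit of identities for the derived binary map $\tilde g$. First I would record, for every $g\in End_e^\oplus(X)$ and all $u,v,z\in X$, the identities $\tilde g(z,z)=z$, $\tilde g(e,v)=g(v)$, $\tilde g(u,e)=\bar g(u)$, together with the \emph{medial} identity $\tilde g(u_1\oplus u_2,v_1\oplus v_2)=\tilde g(u_1,v_1)\oplus\tilde g(u_2,v_2)$. Each is obtained from the defining relation $e\oplus\tilde g(u,v)=\bar g(u)\oplus g(v)$ of condition $(iv)$ in Definition~\ref{End_e}: condition $(iii)$ gives the first, $\bar g(e)=e$ gives the second and third, and the medial identity uses that both $g$ and $\bar g$ are $\oplus$-homomorphisms (recorded just before the statement) together with mediality of $\oplus$; in every case the leading $e$ is removed at the end by the cancellation axiom.

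Next I would show that the constants and the operation stay inside $End_e^\oplus(X)$, computing the derived data of a generic value along the way. For $\mathbb p_{f,g,h}$, conditions $(i)$ and $(ii)$ of Definition~\ref{End_e} are immediate from the medial identity (with $f,h$ homomorphisms) and from $\tilde g(e,e)=e$. For $(iii)$ and $(iv)$ I would propose and verify the formulas
\[
\overline{\mathbb p_{f,g,h}}=\mathbb p_{\bar f,g,\bar h},\qquad
\widetilde{\mathbb p_{f,g,h}}(u,v)=\tilde g\bigl(\tilde f(u,v),\tilde h(u,v)\bigr).
\]
The first is checked by forming $\mathbb p_{\bar f,g,\bar h}(u)\oplus\mathbb p_{f,g,h}(u)$ and collapsing it through the medial identity to $\tilde g\bigl(\bar f(u)\oplus f(u),\,\bar h(u)\oplus h(u)\bigr)=\tilde g(e\oplus u,e\oplus u)=e\oplus u$, using condition $(iii)$ and idempotency; uniqueness of the complement (by cancellation) then identifies the two maps. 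Feeding this into the defining relation and using the medial identity once more yields the second formula. The constants are dispatched directly, giving $\overline{\mathbb 1}=\mathbb 0$, $\overline{\mathbb 0}=\mathbb 1$, $\overline{\muuu}=\muuu$, $\widetilde{\muuu}(u,v)=u\oplus v$, and in particular the useful identity $\mathbb p_{\alpha,\muuu,\beta}=\alpha\oplus\beta$ computed pointwise.

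Armed with this, axioms \ref{alg_mu}--\ref{alg_cancel} follow by pointwise evaluation: \ref{alg_mu} from $\tilde g(u,e)=\bar g(u)$ and $\overline{\muuu}=\muuu$; \ref{alg_01} from $\tilde a(e,x)=a(x)$; \ref{alg_idem} from $\tilde b(a(x),a(x))=a(x)$; \ref{alg_0} and \ref{alg_1} from $\widetilde{\mathbb 0}(u,v)=u$ and $\widetilde{\mathbb 1}(u,v)=v$; and \ref{alg_cancel} from $\mathbb p_{a,\muuu,b}=a\oplus b$ together with cancellation in $X$. Axiom \ref{alg_medial} reduces, after writing $\mathbb p_{\alpha,\muuu,\beta}=\alpha\oplus\beta$ on both sides, to the single medial identity $\tilde c(u_1\oplus u_2,v_1\oplus v_2)=\tilde c(u_1,v_1)\oplus\tilde c(u_2,v_2)$ applied to $u_i=a_i(x)$ and $v_i=b_i(x)$, so it is already in hand.

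The genuine obstacle is axiom \ref{alg_homo}, but the work for it has in fact been done in the well-definedness step. Evaluating the left-hand side at $x$ gives $\widetilde{\mathbb p_{c_1,c_2,c_3}}(a(x),b(x))$, which by the second formula above equals $\tilde{c_2}\bigl(\tilde{c_1}(a(x),b(x)),\tilde{c_3}(a(x),b(x))\bigr)$; evaluating the right-hand side at $x$ gives $\tilde{c_2}\bigl(\mathbb p_{a,c_1,b}(x),\mathbb p_{a,c_3,b}(x)\bigr)$, which is literally the same expression. Hence \ref{alg_homo} holds once the formula for $\widetilde{\mathbb p_{c_1,c_2,c_3}}$ is established, and that formula --- resting on the complement identity $\overline{\mathbb p_{c_1,c_2,c_3}}=\mathbb p_{\bar c_1,c_2,\bar c_3}$ and a double application of the medial identity --- is the step I expect to require the most care.
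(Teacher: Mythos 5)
Your proposal is correct, but it takes a genuinely different route from the paper. The paper's proof does exactly what you do in your ``well-definedness'' step --- it establishes the diagonal identity $\tilde g(x,x)=x$, the medial identity for $\tilde g$, and the two closure formulas $\bar{\mathbb{p}}_{f,g,h}(x)=\tilde{g}(\bar{f}(x),\bar{h}(x))$ and $\tilde{\mathbb{p}}_{f,g,h}(x,y)=\tilde{g}(\tilde{f}(x,y),\tilde{h}(x,y))$, which coincide with yours --- but it then stops and delegates the verification of axioms \ref{alg_mu}--\ref{alg_medial} to Theorem 6.2 of the mobi-algebra paper: the pointwise structure $(End_e^{\oplus}(X),\overline{(\,)},\oplus,\cdot,\mathbb{1})$ is an IMM* algebra, and the equation $\overline{\mathbb{1}}\oplus\chi=(\overline{g}\cdot f)\oplus(g\cdot h)$ has the solution $\chi(x)=\tilde g(f(x),h(x))$, which is precisely the hypothesis that theorem needs to manufacture a mobi algebra. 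You instead verify all eight axioms by direct pointwise computation from your toolkit of identities for $\tilde g$ (in particular $\tilde g(e,v)=g(v)$, $\tilde g(u,e)=\bar g(u)$, and $\mathbb{p}_{\alpha,\muuu,\beta}=\alpha\oplus\beta$), with \ref{alg_homo} falling out of the formula for $\widetilde{\mathbb{p}_{c_1,c_2,c_3}}$ and \ref{alg_medial} reducing to the medial identity for $\tilde c$ --- all of which checks out. What the paper's route buys is brevity and a conceptual placement of the example inside the general IMM*-to-mobi correspondence; what your route buys is a self-contained argument that a reader can follow without unpacking Definition 6.1 and Theorem 6.2 of the earlier paper, at the cost of eight (easy) verifications.
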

\begin{proof}
First we show that the operation $\mathbb{p}$ is well defined. To do that we observe that if $g\in  End_{e}^{\oplus}(X)$ then $\tilde{g}$, defined as in $(iv)$, is such that
\begin{eqnarray}
\tilde{g}(x_1\oplus x_2,y_1\oplus y_2)&=&\tilde{g}(x_1,y_1)\oplus \tilde{g}(x_2,y_2)\\
\tilde{g}(x,x)&=&x
\end{eqnarray}
Now, given $f,g,h\in End_{e}^{\oplus}$ we have to show that $\mathbb{p}_{f,g,h}$, defined for every $x\in X$ as $\tilde{g}(f(x),h(x))$, belongs to $End_{e}^{\oplus}$. The first two conditions $(i)$ and $(ii)$ follow from the properties of $\tilde{g}$. Condition $(iii)$ is satisfied with $\bar{\mathbb{p}}_{f,g,h}(x)=\tilde{g}(\bar{f}(x),\bar{h}(x))$, while condition $(iv)$ is satisfied with $\tilde{\mathbb{p}}_{f,g,h}(x,y)=\tilde{g}(\tilde{f}(x,y),\tilde{h}(x,y))$.
The fact that the axioms of a mobi algebra (Definition \ref{mobi_algebra}) are verified by $\mathbb{p}_{f,g,h}(x)$ is then a consequence of Theorem 6.2 from \cite{mobi}. Indeed, the system $(End_e^{\oplus}(X), \overline{()},\oplus,\cdot,\mathbb{1})$ with $\overline{(g)}(x)=\overline{g}(x)$, $(f\oplus g)(x)=f(x)\oplus g(x)$, $(f\cdot g)(x)=f(g(x))$ and $\mathbb{1}(x)=x$ is a IMM* algebra in the sense of Definition 6.1 from \cite{mobi}. Moreover, the equation $\overline{\mathbb{1}}\oplus \chi=(\overline{g}\cdot f)\oplus (g\cdot h)$ has a solution in $End_e^\oplus(X)$, namely $\chi(x)=\tilde{g}(f(x),h(x))$ for every $x\in X$.
\end{proof}

As it was shown in \cite{mobi}, every unitary ring in which the element $2$ is invertible gives rise to a mobi algebra structure. A particular case, which will be used in section \ref{sec_example-midpoint}, is the ring of endomorphisms of an abelian group. Let $(X,+,0)$ be an abelian group. It is clear that the map $x\mapsto x+x$ is invertible if and only if for every $x\in X$, there exists $y\in X$ with $y+y=x$. For obvious reasons the element $y$ such that $y+y=x$ will be denoted $\frac{x}{2}$. As usual, we denote the set of group endomorphisms of $X$ by $End(X)$.

\begin{proposition}\label{prop_endX}
Let $(X,+,0)$ be an abelian group in which the endomorphism sending  $x$ to $x+x$ is invertible. The system $$(End(X), \mathbb{p}, \mathbb{0}, \muuu,\mathbb{1}),$$ with $\mathbb{p}(x)=\mathbb{p}_{f,g,h}(x)=f(x)-gf(x)+gh(x)$, $\mathbb{0}(x)=0$, $\muuu(x)=\frac{x}{2}$ and $\mathbb{1}(x)=x$  is a mobi algebra.
\end{proposition}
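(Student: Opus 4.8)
The plan is to recognise this system as a direct instance of the ring-to-mobi-algebra construction recalled immediately above the statement, applied to the endomorphism ring of the abelian group $X$. First I would record that $End(X)$, equipped with the pointwise sum $(f+g)(x)=f(x)+g(x)$ and composition $(fg)(x)=f(g(x))$ as multiplication, is a unitary ring: distributivity of composition over the pointwise sum on both sides is exactly the additivity of the endomorphisms, associativity of composition is automatic, $\mathbb{0}$ is the additive zero and $\mathbb{1}=\mathrm{id}_X$ is a two-sided unit. This is the classical endomorphism ring of $X$.

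Next I would verify that the element $2=\mathbb{1}+\mathbb{1}$ is invertible in this ring and identify its inverse with $\muuu$. Since $(\mathbb{1}+\mathbb{1})(x)=x+x$, the ring element $2$ is precisely the doubling endomorphism, which is invertible by hypothesis. Its inverse is the map $x\mapsto\frac{x}{2}$, and this map is additive --- the set-inverse of a group automorphism is again a homomorphism, or directly $\frac{x}{2}+\frac{y}{2}$ is the unique halving of $x+y$ by commutativity --- so it lies in $End(X)$ and equals $\muuu$. Hence $\muuu=2^{-1}$, with $\mathbb{0}$ and $\mathbb{1}$ the zero and unit of the ring.

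With these identifications the stated operation $\mathbb{p}_{f,g,h}(x)=f(x)-gf(x)+gh(x)$ is simply the element $f-gf+gh$ of the ring $End(X)$, which is the ring-theoretic formula $p(a,b,c)=a+bc-ba$ with $a=f$, $b=g$, $c=h$, the middle argument $g$ acting on the left of both $f$ and $h$ so as to match $gf$ and $gh$. The conclusion then follows at once from the result of \cite{mobi} that any unitary ring in which $2$ is invertible, endowed with $p(a,b,c)=a+bc-ba$ together with $0$, $2^{-1}$ and $1$, is a mobi algebra. Since that theorem carries out the verification of axioms \ref{alg_mu}--\ref{alg_medial} once and for all, there is no genuinely hard step here; the only points demanding care are the bookkeeping of the multiplication order in the non-commutative ring $End(X)$ (so that $\mathbb{p}_{f,g,h}$ is read as $f-gf+gh$ rather than $f-fg+hg$) and checking that $\muuu$ genuinely belongs to the ring as $2^{-1}$, rather than being merely a set-theoretic half-map. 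A self-contained alternative would substitute the formula into \ref{alg_mu}--\ref{alg_medial} and simplify directly in $End(X)$, but the reduction to the general ring statement is the efficient route.
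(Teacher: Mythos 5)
Your proposal is correct and follows exactly the route the paper takes: the published proof is a one-line reduction to Theorem 7.2 of \cite{mobi}, observing that $End(X)$ is a unitary ring in which $2$ is invertible (with $\muuu=2^{-1}$), so that $\mathbb{p}_{f,g,h}=f-gf+gh$ is just $p(a,b,c)=a+bc-ba$ in that ring. Your additional checks (additivity of the halving map, the order of composition in the non-commutative ring) are sensible bookkeeping that the paper leaves implicit.
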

\begin{proof}
The result follows directly from  Theorem 7.2 in \cite{mobi} since $End(X)$ is a unitary ring with one half. 
\end{proof}

In spite of these somehow more abstract examples, the reader is kindly reminded that it is useful to keep in mind that the most natural and intuitive mobi algebra is the closed unit interval with $A=[0,1]$, the three constants $0$, $\frac{1}{2}$, $1$ and the operation \mbox{$p(a,b,c)=(1-b)a+bc$}. The mobi algebra, in this case, is interpreted as the set of scalars over which a mobi space is constructed.

\section{Mobi space}\label{sec_mobi_space}

In this section we give the definition of a mobi space over a mobi algebra. Its main purpose is to serve as a model for spaces with a geodesic path connecting any two points. It is similar to a module over a ring in the sense that it has an associated mobi algebra which behaves as the set of scalars. In Section \ref{sec_example-midpoint} (see also Section \ref{sec:modules} at the end of the paper) we show that the particular case of affine mobi space is indeed the same as a module over a ring when the mobi algebra is a ring.

\begin{definition}\label{mobi_space}
Let $(A,p,0,\muu,1)$ be a mobi algebra. An $A$-mobi space $(X,q)$, consists of a set $X$ and a map $q\colon{X\times A\times X\to X}$ such that:
\begin{enumerate}[label={\bf (X\arabic*)}]
\item\label{space_0} $q(x,0,y)=x$
\item\label{space_1} $q(x,1,y)=y$
\item\label{space_idem} $q(x,a,x)=x$
\item\label{space_cancel} $q(x,\muu,y_1)=q(x,\muu,y_2)\implies y_1=y_2$
\item\label{space_homo} $q(q(x,a,y),b,q(x,c,y))=q(x,p(a,b,c),y)$
\end{enumerate}
\end{definition}

The axioms \ref{space_0} to \ref{space_homo} are the natural generalizations of axioms \ref{alg_idem} to \ref{alg_homo} of a mobi algebra. The natural generalization of \ref{alg_medial} would be
\begin{eqnarray}\label{affine}
q(q(x_1,a,y_1),\muu,q(x_2,a,y_2))=q(q(x_1,\muu,x_2),a,q(y_1,\muu,y_2)).
\end{eqnarray}
This condition, however, is too restrictive and is not in general verified by geodesic paths. That is the reason why we do not include it.
When (\ref{affine}) is satisfied for all $x_1,x_2,y_1,y_2\in X$ and $a\in A$, then we say that the $A$-mobi space $(X,q)$ is affine and speak of an $A$-mobi affine space (see Section \ref{sec_example-midpoint}).

Here are some immediate consequences of the axioms for a mobi space.

\begin{proposition}\label{properties_space} Let $(A,p,0,\muu,1)$ be a mobi algebra and $(X,q)$ an A-mobi space. It follows that:
\begin{enumerate}[label={\bf (Y\arabic*)}]
\item\label{Y1} $q(y,a,x)=q(x,\overline{a},y)$
\item\label{Y2} $q(y,\muu,x)=q(x,\muu,y)$
\item\label{Y3} $q(x,a,q(x,b,y))=q(x,a\cdot b,y)$
\item\label{Y4} $q(q(x,a,y),b,y)=q(x,a\circ b,y)$
\item\label{Y7} $q(q(x,a,y),\muu,q(x,b,y))=q(x,a\oplus b,y)$
\item\label{Y6} $q(x,\muu,q(x,a,y))=q(x,a,q(x,\muu,y))$
\item\label{Y5} $q(q(x,a,y),\muu,q(y,a,x))=q(x,\muu,y)$
\item\label{Y8} $q(q(q(x,a,y),b,x),\muu,q(x,b,q(x,c,y)))\\
=q(x,\muu,q(x,p(a,b,c),y))$
\item\label{Y9} $q(x,a,y)=q(y,a,x)\Rightarrow q(x,a,y)=q(x,\muu,y)$
\item\label{Y10} $q(x,a,y)=q(x,b,y)\Rightarrow q(x,p(a,t,b),y)=q(x,a,y)$, \text{for all $t$}.
\end{enumerate}
\end{proposition}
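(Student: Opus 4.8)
The plan is to treat the homomorphism axiom \ref{space_homo} as the single engine behind every identity, feeding it the two ``endpoint substitutions'' $x=q(x,0,y)$ and $y=q(x,1,y)$ supplied by \ref{space_0} and \ref{space_1}. Each left-hand side is first rewritten as a value of $q$ whose outer arguments are themselves of the form $q(x,\cdot,y)$; collapsing it through \ref{space_homo} replaces the whole expression by a single $q(x,s,y)$ whose scalar $s\in A$ is an iterated application of $p$. The statement then reduces to a scalar identity in the mobi algebra, which is read off from the definitions of $\overline{a}$, $a\cdot b$, $a\oplus b$, $a\circ b$ and the laws \eqref{Bmuu}--\eqref{important} recalled earlier. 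I would prove the items in the order listed, so that each one may use the earlier ones.

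Carrying this out, item \textbf{(Y1)} is the keystone: writing $q(y,a,x)=q(q(x,1,y),a,q(x,0,y))$ and applying \ref{space_homo} gives $q(x,p(1,a,0),y)=q(x,\overline{a},y)$, and \textbf{(Y2)} is the case $a=\muu$ together with $\overline{\muu}=\muu$ from \eqref{Bmuu}. Substituting a single endpoint and collapsing likewise yields \textbf{(Y3)} and \textbf{(Y4)} from the definitions of $\cdot$ and $\circ$, while the midpoint--combination identity $q(q(x,a,y),\muu,q(x,b,y))=q(x,a\oplus b,y)$ is just the instance $b=\muu$ of \ref{space_homo} read through the definition of $\oplus$. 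From here \textbf{(Y6)} follows by applying \textbf{(Y3)} to each side and invoking $a\cdot\muu=\muu\cdot a$ from \eqref{B130}, and \textbf{(Y10)} is immediate: \ref{space_homo} turns $q(x,p(a,t,b),y)$ into $q(q(x,a,y),t,q(x,b,y))$, and under the hypothesis $q(x,a,y)=q(x,b,y)$ this is $q(w,t,w)=w$ by idempotency \ref{space_idem}.

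Two of the remaining items need a short scalar computation. For $q(q(x,a,y),\muu,q(y,a,x))=q(x,\muu,y)$ I first use \textbf{(Y1)} to replace $q(y,a,x)$ by $q(x,\overline{a},y)$, combine the two midpoint arguments via the midpoint--combination identity into $q(x,a\oplus\overline{a},y)$, and then check the scalar fact $a\oplus\overline{a}=\muu$, which comes from \eqref{commut}, \eqref{Bmuu} and \eqref{B7}. The same scalar fact drives \textbf{(Y9)}: the hypothesis $q(x,a,y)=q(y,a,x)$ becomes $q(x,a,y)=q(x,\overline{a},y)$ through \textbf{(Y1)}, so forming the midpoint of the two (now equal) sides gives $q(x,a\oplus\overline{a},y)=q(x,\muu,y)$ on one hand, while idempotency \ref{space_idem} collapses the midpoint of two equal points to $q(x,a,y)$ on the other.

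The main obstacle is \textbf{(Y8)}, the only place where the algebra is genuinely used. I would reduce the left-hand side piecewise: $q(q(x,a,y),b,x)=q(x,p(a,b,0),y)=q(x,\overline{b}\cdot a,y)$, using $x=q(x,0,y)$, axiom \ref{space_homo} and the scalar identity $p(a,b,0)=\overline{b}\cdot a$ coming from \eqref{commut}; and $q(x,b,q(x,c,y))=q(x,b\cdot c,y)$ by \textbf{(Y3)}. Combining these through the midpoint--combination identity gives $q(x,(\overline{b}\cdot a)\oplus(b\cdot c),y)$. On the right-hand side, \textbf{(Y3)} yields $q(x,\muu,q(x,p(a,b,c),y))=q(x,\muu\cdot p(a,b,c),y)$, and the two scalars agree precisely by the distributivity law \eqref{important}, namely $\muu\cdot p(a,b,c)=(\overline{b}\cdot a)\oplus(b\cdot c)$. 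The only delicate points are getting these scalar reductions exactly right and recognizing that \eqref{important} is tailored to match the combined left-hand scalar; everything else is routine bookkeeping. It is worth noting that no use of the cancellation hypotheses \ref{space_cancel} or \eqref{cancell-muu} is needed for any of \textbf{(Y1)}--\textbf{(Y10)}.
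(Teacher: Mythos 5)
Your proposal is correct and follows essentially the same strategy as the paper's own proof: each identity is collapsed through \ref{space_homo} (seeded with $x=q(x,0,y)$ and $y=q(x,1,y)$) to a scalar identity in the mobi algebra, with \ref{Y8} matched against (\ref{important}). The only variations are cosmetic --- you get \ref{Y4} directly as the instance $c=1$ of \ref{space_homo} instead of the paper's route via \ref{Y1}, \ref{Y3} and $\overline{a\circ b}=\overline{b}\cdot\overline{a}$, and you run the \ref{Y8} computation from the space side down to scalars rather than expanding (\ref{important}) upward.
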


\begin{proof}
The following proof of \ref{Y1}, bearing in mind (\ref{def_complementar}), uses \ref{space_homo}, \ref{space_1} and \ref{space_0}:
\begin{eqnarray*}
q(x,\overline{a},y)&=&q(x,p(1,a,0),y)\\
                   &=&q(q(x,1,y),a,q(x,0,y))\\
                   &=&q(y,a,x).
\end{eqnarray*}
\ref{Y2} follows directly from \ref{alg_mu} and \ref{Y1}.
Begining with (\ref{def_product}), \ref{Y3} is a consequence of \ref{space_homo} and \ref{space_0}:
\begin{eqnarray*}
q(x,a\cdot b,y)&=&q(x,p(0,a,b),y)\\
          &=&q(q(x,0,y),a,q(x,b,y))\\
          &=&q(x,a,q(x,b,y)).
\end{eqnarray*}
Considering (\ref{circ_cdot}), property \ref{Y4} follows from \ref{Y1} and \ref{Y3}.
\begin{eqnarray*}
q(q(x,a,y),b,y)&=&q(y,\overline{b},q(y,\overline{a},x))\\
               &=&q(y,\overline{b}\cdot\overline{a},x)\\
               &=&q(y,\overline{a\circ b},x)\\
               &=&q(x,a\circ b,y).
\end{eqnarray*}
Considering $(\ref{def_oplus})$, \ref{Y7} is just a particular case of \ref{space_homo}.
To prove \ref{Y6}, we use \ref{space_homo}, (\ref{B130}) and \ref{space_0}.
\begin{eqnarray*}
q(x,\muu,q(x,a,y))&=&q(q(x,0,y),\muu,q(x,a,y))\\
                 &=&q(x,p(0,\muu,a),y)\\
                 &=&q(x,p(0,a,\muu),y)\\
                 &=&q(q(x,0,y),a,q(x,\muu,y))\\
                 &=&q(x,a,q(x,\muu,y)).
\end{eqnarray*}
The following proof of \ref{Y5} is based on \ref{Y1}, \ref{space_homo} and (\ref{B7});
\begin{eqnarray*}
q(q(x,a,y),\muu,q(y,a,x))&=&q(q(x,a,y),\muu,q(x,\overline{a},y))\\
                        &=&q(x,p(a,\muu,\overline{a}),y)\\
                        &=&q(x,\muu,y).
\end{eqnarray*}
To prove \ref{Y8}, we start with the important property (\ref{important}) of the underlying mobi algebra and then get:
\begin{eqnarray*}
& &q(x,\muu \cdot p(a,b,c),y)=q(x,\overline{b}\cdot a\oplus b\cdot c,y)\\
&\Rightarrow& q(x,p(0,\muu,p(a,b,c)),y)=q(x,p(\overline{b}\cdot a, \muu,b\cdot c),y)\\
&\Rightarrow& q(x,\muu,q(x,p(a,b,c),y))=q(q(x,\overline{b}\cdot a,y),\muu,q(x,b\cdot c,y))\\
&\Rightarrow& q(x,\muu,q(x,p(a,b,c),y))\\
& &=q(q(x,\overline{b},q(x,a,y)),\muu,q(x,b,q(x,c,y))).
\end{eqnarray*}
It is easy to see that $\ref{Y9}$ is a direct consequence of $\ref{Y5}$, while $\ref{Y10}$ is a consequence of \ref{space_idem} and \ref{space_homo}. 
\end{proof}

The property $\ref{Y9}$ shows that if someone is travelling from $x$ to $y$ and someone else is travelling from $y$ to $x$ (along a unique geodesic path parametrized by $q$ that connects $x$ and $y$) then they meet at $q(x,\muu,y)$.
The element $\muu$ in a mobility algebra is understood as the point at which $p(a,\muu,b)$ equals  $p(b,\muu,a)$. When interpreted in terms of a parameter on the parametrized operation $q(x,\muu,y)$ on a mobility space it has the meaning that, although it  may not be one half of the path between $x$ and $y$ in the expected sense (see Examples 2 and 3 in \cite{mobi}), it is nevertheless the position at half way in the sense of a metric in a metric space. Property $\ref{Y10}$ shows that if a particle is at the same place at two distinct moments $a$ and $b$, then it stays there at any instant in-between.

A related property, which is perhaps worthwhile studying, is the following one: 
\begin{equation}\label{eq: conjecture}
\text{if there exist}\ a\neq b \  \text{with}\ q(x,a,y)=q(x,b,y)\ \text{then}\ x=y.
\end{equation}
This behaviour is desirable to model geodesics  as shortest paths but it cannot be deduced from the axioms of a mobi space. Indeed,
in Example \ref{ex_lozenge}, with $h=-1$, we have $q(x,(t,s),y)=(1-t+s)x+(t-s)y$ and hence, for every appropriate $u$, $x$ and $y$, $$q(x,(t+u,s+u),y)= q(x,(t,s),y).$$ For example, $q(x,(1,0),y)=q(x,(1/2,-1/2),y)=y$. Nevertheless, this property is satisfied when the mobi algebra is the unit interval and the mobi space is a space with unique geodesic paths. See the remark following Theorem \ref{prop unique beta}.

\section{Examples}\label{sec_examples}

In this section we give a list of examples and families of examples of mobi spaces over an appropriate mobi algebra. 

\subsection{First examples}
In the list of examples below, the underlying mobi algebra structure $(A, p, 0, \muu, 1)$ is the closed unit interval, i.e. $A=[0,1]$, the three constants are $0, \frac{1}{2}, 1$ and 
$$p(a,b,c)=(1-b)a+b c,\ \textrm{for all}\ a,b,c\in A.$$ In each case, we present a set $X$ and a ternary operation $q(x,t,y)\in X$, for all $x,y\in X$, and $t \in A$, verifying the axioms of Definition \ref{mobi_space}. 
\begin{example}\label{ex1to4}
Mobi spaces $(X,q)$ over the unit interval:
\begin{enumerate}
\item \label{ex1} Euclidean spaces provide examples of the form
\[X=\R^n\quad (n\in\mathbb{N})\]
and
\[q(x,t,y)=(1-t)x+t\,y.\]

\item \label{ex2} The positive real numbers with the metric inspired by the usual euclidean distance after applying a transformation $x\mapsto \exp(x)$ provides an example of the form
\[X=\R^+\]
with
\[q(x,t,y)=x^{1-t} y^t.\]

\item \label{ex3} Another example, still inspired by the euclidean metric but with a different transformation, is
\[X=\R^+\]
\[q(x,t,y)=\dfrac{x y}{t x+(1-t) y}.\]

\item \label{ex4} The previous examples are all particular instances of a family of examples which can be constructed as follows. Start with  a function $F$, invertible on a subset of the real numbers $X\subseteq \R$, and put
\begin{eqnarray*}
q(x,t,y)=F^{-1}((1-t)F(x)+t\,F(y)).
\end{eqnarray*}
For instance, Example \ref{ex1to4}(\ref{ex2}) corresponds to $F(x)=\log x$ while Example \ref{ex1to4}(\ref{ex3}) to $F(x)=\frac{1}{x}$. Clearly, in Example \ref{ex1to4}(\ref{ex1}), $F(x)=x$.
\end{enumerate}
\end{example}

\subsection{Other type of examples}

The canonical formula for euclidean spaces, considered in Example \ref{ex1to4}(\ref{ex1}) of the previous subsection, can be adapted to provide examples of mobi spaces in subsets of $\R^n$.
\begin{example}\label{ex5to9}
Further examples of mobi spaces $(X,q)$ over the unit interval.
\begin{enumerate}
\item \label{ex5} The two subsets of the plane 
\[X=]-\pi,\pi]\times\R\ \textrm{or}\ X=[0,2\pi[\times\R\]
with the formula
\[q((\theta_1,z_1),t,(\theta_2,z_2))=((1-t)\theta_1+t\,\theta_2,(1-t) z_1+t z_2)\]
can be viewed as two different choices of geodesic paths on the cylinder
\[\{(x,y,z)\in\R^3\mid x=\cos\theta, y=\sin\theta, (\theta,z)\in X\},\]
from the point $(\cos\theta_1,\sin\theta_1,z_1)$ to the point $(\cos\theta_2,\sin\theta_2,z_2)$. For instance, if considering the point in the cylinder $$(\cos(-\frac{\pi}{4}),\sin(-\frac{\pi}{4}),0)=(\cos(\frac{7\pi}{4}),\sin(\frac{7\pi}{4}),0)$$ and the point $$(\cos(\frac{\pi}{4}),\sin(\frac{\pi}{4}),0),$$ the two different parametrizations give two different paths between them. Indeed, one path goes through  $(1,0,0)$ while the other goes thorough $(-1,0,0)$. A third choice for a parametrization, which corresponds to the shortest paths for any two points on the cylinder, will be given  in subsection \ref{subsec: identification spaces}.

\item \label{ex6} Other examples are also possible, such as the set
\[X=\mathbb{R}^+\times\mathbb{R}\]
with the formula
\[q((x_1,x_2),t,(y_1,y_2))=(x_1+(y_1-x_1)t,x_2+(y_2-x_2)\frac{y_1\,t}{(1-t)x_1+t\,y_1}).\]

\item \label{ex7} Or the set
\[X=\mathbb{R}^+\times\mathbb{R}\]
with the formula
\[q((x_1,x_2),t,(y_1,y_2))=(x_1+(y_1-x_1)t,x_2+\frac{y_2-x_2}{x_1+y_1}(2\,x_1 t+(y_1-x_1) t^2)).\]

\item \label{ex8} We can also consider the set
\[X=\mathbb{R}^2\]
and the formula
\[q((x_1,x_2),t,(y_1,y_2))=\]
\[(x_1+(y_1-x_1)t,x_2+(y_2-x_2)\frac{3x_1^2\,t+3x_1(y_1-x_1)t^2+(y_1-x_1)^2\,t^3}{x_1^2+x_1y_1+y_1^2}),\]
if $(x_1,y_1)\neq (0,0)$, and
\[q((0,x_2),t,(0,y_2))=\left(0,x_2+t\,(y_2-x_2)\right).\]

\item \label{ex9} The last three examples are just particular cases of the following family of mobi spaces, where $f$ is an injective real function of one variable and $X\subseteq\mathbb{R}^2$ is  any set for which the formula 
\[q((x_1,x_2),t,(y_1,y_2))=\]
\[ \left(x_1+(y_1-x_1)t,x_2+(y_2-x_2)\frac{f(x_1+(y_1-x_1) t)-f(x_1)}{f(y_1)-f(x_1)}\right),\]
if $x_1\neq y_1$, and
\[q((x,x_2),t,(x,y_2))=\left(x,x_2+t\,(y_2-x_2)\right),\]
defines a map $q\colon{X\times[0,1]\times X\to X}$.   \end{enumerate}
\end{example}
Examples \ref{ex5to9}(\ref{ex6}), (\ref{ex7}) and (\ref{ex8}), are obtained as particular cases of Example \ref{ex5to9}(\ref{ex9}), respectively, with $f(x)=\frac{1}{x}$, $f(x)=x^2$ and $f(x)=x^3$.


So far we have considered examples of mobi spaces over the unit interval. Here is an example with a different mobi algebra.
\begin{example} \label{ex12}\label{ex_lozenge}
 For the mobi algebra $(A,p,0,\muu,1)$ let us use
 $$A=\left\{(t_1,t_2)\in\mathbb{R}^2\colon \vert t_2\vert \leq t_1 \leq 1-\vert t_2\vert\right\}$$
$$\muu=\left(\frac{1}{2},0\right)\,;\,1=(1,0)\,;\,0=(0,0)$$
\begin{eqnarray*}
p(a,b,c)=(a_1-b_1 a_1-b_2 a_2+b_1 c_1+b_2 c_2,\\
          a_2-b_1 a_2-b_2 a_1+b_1 c_2+b_2 c_1).
\end{eqnarray*}
And for the mobi space $(X,q)$, let us use
$X=[0,1]$ and $$q(x,(t,s),y)= (1-t-h\,s) x+(t+h\,s)y,$$ with $h=\pm 1$.

\end{example}

Let us now turn to a special case of mobi spaces, namely affine mobi spaces.

\subsection{Affine mobi spaces}\label{sec_example-midpoint}

Let $(A,p,0,\muu,1)$ be a mobi algebra. An affine mobi space (over $A$) is a mobi space $(X,q)$ for which the condition 
\begin{equation}\label{eq: affine mobi space}
q(q(x_1,a,y_1),\muu,q(x_2,a,y_2))=q(q(x_1,\muu,x_2),a,q(y_1,\muu,y_2))
\end{equation}
 is satisfied for every $x_1,x_2,y_1,y_2\in X$ and $a\in A$.

We observe that the (even) stronger condition 
\begin{equation}\label{eq: commutative affine mobi space}
q(q(x_1,a,y_1),b,q(x_2,a,y_2))=q(q(x_1,b,x_2),a,q(y_1,b,y_2))
\end{equation}
 for every $x_1,x_2,y_1,y_2\in X$ and $a,b\in A$, is worthwhile studying due to its connection with Proposition 6.4 in \cite{mobi}.

If $(X,q)$ is an affine mobi space then we obtain a midpoint algebra $(X,\oplus)$ by defining $x\oplus y=q(x,\muu,y)$ (see \cite{ccm_magmas} for the particulars of the binary operation $\oplus$). In a future work we will investigate the converse. That is,  given a midpoint algebra $(X,\oplus)$, we will study under which conditions it is obtained from an affine mobi space. Another interesting topic is to study the collection of all mobi space structures which give rise to the same midpoint algebra, developing the concept of homology for mobi affine spaces.

For the moment, let us mention that when we choose an origin $e\in X$ in a given (non-empty) midpoint algebra and assuming that there is an abelian group structure associated to it (in the sense of \cite{ccm_magmas}), then, the set of group endomorphisms on $X$ has a ring structure and, by the use of Proposition \ref{prop_endX}, we conclude that it gives rise to a mobi algebra. Recall that a midpoint algebra $(X,\oplus)$ with an associated abelian group is a midpoint algebra with the property that for every $x,y\in X$ there is an element $(x+y)\in X$ such that $e\oplus(x+y)=x\oplus y$, then it follows that  $(X,+,e)$ is an abelian group with the property that the map $x\mapsto x+x$ is invertible, with inverse $x\mapsto e\oplus x$.

 This simple observation gives rise to an important characterization, to be further developed into a future work, which can be stated as follows:

\begin{quotation}

The collection of mobi algebra homomorphisms from $A$ to $End(X)$ is in a one-to-one correspondence with the collection of all affine mobi spaces $(X,q)$ such that $\oplus$ is determined by $q$.

\end{quotation}

 It is remarkable how similar the notion of affine mobi space is from the notion of an affine vector space. In particular, when $A$ has an inverse to $\muu$, say $2\in A$, then the operation $+$ always exists and is explicitly given by the formula $x+y=q(e,2,q(x,\muu,y))$. In this case, we have precisely the notion of an $A$-module (see the last Section).

Surprisingly, even when $(X,\oplus)$  does not allow an abelian group structure, a similar characterization is still possible, at the expense of replacing the ring  $End(X)$ with a more sophisticated structure, namely  $End_e^{\oplus}(X)$  which is specially designed (Definition \ref{End_e}) to keep the analogy with the previous result while extending it into a more general setting.

 \begin{quotation}
 The collection of mobi algebra homomorphisms from $A$ to $End_e^{\oplus}(X)$ is in a one-to-one correspondence with the collection of all affine mobi spaces $(X,q)$ such that $\oplus$ is determined by $q$.
 \end{quotation}

This and other aspects of affine mobi spaces will be developed thoroughly in the continuation of this study.
However, in order to give a glimpse of the kind of results that are expected, we present an example of a mobi space constructed from a midpoint algebra.

\begin{example} \label{ex10} 
 Consider the midpoint algebra $(X,\oplus)$, with $X=[0,1]$ and $x\oplus y=\dfrac{x+y}{2}$ for every $x,y\in X$. The fixed element in $X$ is chosen to be $e=0$. The mobi algebra is $(A,p,(0,0),(\frac{1}{2},0),(1,0))$ with
$$A=\left\{(x,y)\in\mathbb{R}^2\mid \vert y\vert \leq x \leq 1-\vert y\vert\right\}$$ 
and
\begin{eqnarray*}
p(a,b,c)=(a_1-b_1 a_1-b_2 a_2+b_1 c_1+b_2 c_2,\\
          a_2-b_1 a_2-b_2 a_1+b_1 c_2+b_2 c_1).
\end{eqnarray*}

Let us now use Proposition \ref{mobiEnd}
 to generate an example of a mobi space from a homomorphism of mobi algebras.
It is easily checked that, for any $h\in[-1,1]$, the map
$$\varphi_{h,(a_1,a_2)}\colon{A\to End_e^\oplus(X)},$$ given by 
$$\varphi_{h,(a_1,a_2)}(x)=(a_1+ h a_2) x$$ is well defined in $End_e^\oplus(X)$. Indeed, Definition \ref{End_e} is verified with 
$$\overline{\varphi}_{h,(a_1,a_2)}(x)=(1-a_1- h a_2) x$$
and
$$\tilde{\varphi}_{h,(a_1,a_2)}(x,y)=(1-a_1-h a_2) x+(a_1+h a_2) y.$$ Note that $\overline{\varphi}_{h,(a_1,a_2)}=\varphi_{h,\overline{(a1,a2)}}$ because $\overline{(a_1,a_2)}=(1-a_1,-a_2)$ follows from (\ref{def_complementar}). However, $\varphi_{h,(a_1,a_2)}$ is an homomorphism of mobi algebras if and only if $h=\pm 1$. In these cases, the mobi operations on $X$, defined by
$$0\oplus q(x,(t,s),y)=\overline{\varphi}_{\pm 1,(t,s)}(x)\oplus\varphi_{\pm 1,(t,s)}(y),$$
are given by:
$$q(x,(t,s),y)=(1-t\mp s) x+(t\pm s) y.$$
\end{example}
The resulting mobi space $(X,q)$ is the same as the one in Example~\ref{ex12}.

We will now see some examples obtained from physics.

\subsection{Examples with physical interpretation}

This section ends with Example \ref{ex11}, obtained from the motion of a projectile, some comments on counter-examples, as well as a general example from mechanics.

\begin{example}\label{ex11}\label{projectiles} For any $k\in\R$, we may form a mobi space $(X,q)$ over the unit interval by taking the set
\[X=\mathbb{R}^2\]
with the formula
\[q((x_1,x_2),t,(y_1,y_2))=\]
\[((1-t)\,x_1+t\, y_1+k (y_2-x_2)^2 (1-t) t, (1-t)\, x_2+ t\, y_2).\]
\end{example}
\noindent In this example, when $y_2>y_1$, the operation $q((x_1,y_1),t,(x_2,y_2))$ simply gives the position at instant $t$ of a projectile in one-dimensional classical mechanics with constant acceleration $a_x=-2 k$, that is moving from position $x_1$ at time $y_1$ to position $x_2$ at time $y_2$.
\begin{example}\label{ex:counterexample}
 If a one-dimensional space $X$ would have been considered to describe the motion of the projectile instead of the Euclidean space-time of Example \ref{projectiles}, the ternary operation would have been:
\[q(x_1,t,x_2)=(1-t)\,x_1+t\, x_2+k (1-0)^2 (1-t) t.\]
This operation does not verify all the axioms of Definition \ref{mobi_space}. In particular, if the particle is not at rest, we will never get the idempotency $q(x,t,x)=x$. Axiom \ref{space_homo} is not verified either.

\end{example}

Most of the operations $q:X\times A\times X \to X$ that we can think of will not verify some of the axioms of Definition \ref{mobi_space}. Let us just point out that a simple example such as $q(x,t,y)=x+t^2(y-x)$, for $x,y$ in some subset of $\R^n$ and $t$ in a non trivial subset of $\R$, does not in general verify \ref{space_homo}.

There are also examples where an operation $q$ might verify the axioms of a mobi space but has no mobi space associated (simply because it may not be everywhere defined). That would be the case if Example \ref{ex11} would be generalized to Special Relativity \cite{relativistic_projectiles}. However, in Minkowski space-time, not every two points can be reached from one another if one point is not inside the {\it light cone} of the other.

\begin{example}\label{ex:fxdot}

Finally, let us observe that, in general, the solutions of $\ddot x=f(x,\dot x,t)$ (where $\dot x$ is the derivative of $x$ with respect to $t$), in $\mathbb{R}^n$, will not verify the axioms of a mobi space. Nevertheless, in $\mathbb{R}^{n+1}$, where the extra dimension is time, and if every two points can be reached from one another, then we can construct a mobi space as explain in Corollary \ref{corollary-xdot}. The case $f(x,\dot x,t)=-2k$ gives Example~\ref{projectiles}. To show another example, let us look at a critically damped harmonic oscillator corresponding to $f(x,\dot x,t)=-k^2 x-2k \dot x$. For any $k\in\R$, we obtain the following mobi space $(\mathbb{R}^2,q)$ over the unit interval 
with the formula
\[q((x_1,x_2),t,(y_1,y_2))=\]
\[((1-t)\,x_1\,e^{k t (x_2-y_2)}+t\, y_1\,e^{k (1-t) (y_2-x_2)}, (1-t)\, x_2+ t\, y_2).\]  

\end{example}

In the following section we will thoroughly analyse  examples occurring from spaces  with geodesic paths.

\section{Examples from Geodesics}\label{geo} 

In this section we analyse spaces which satisfy the equation for geodesics and observe that they all give rise to a mobi space. We have decided, for simplicity, to express the results within the scope of $\R^n$, but it is clear that the same principle will be valid for pseudo-Riemmanian manifolds with appropriate tensor metrics. This would, however, require a more sophisticated level of abstraction which is beyond our current purpose, namely to show the existence of examples of mobi spaces that are obtained from spaces with geodesics.

We also show that under suitable choices for appropriate sections, every identification space inherits the mobi space structure from its covering space. We illustrate this concept with the case of the cylinder (see Example \ref{ex5to9}(\ref{ex5}) and Example \ref{ex14}).

\subsection{Spaces with unique geodesics}
Let $X,V\subseteq \mathbb{R}^{n}$ be two open subsets of the $n$-dimensional space, with $V$ a vector space, and  $$g\colon{X\times V\to \mathbb{R}^{n}}$$ a map such that \begin{equation}
g(x,\lambda v)=\lambda^2g(x,v),\label{eq-1}
\end{equation} for all $x\in X$, $v\in V$ and $\lambda\in\mathbb{R}$. Moreover, the  initial value problem
\begin{equation}\label{initial value problem}
\left\lbrace\begin{matrix}
x'=v      &,& x(0)=x_0\in X\\
v'=g(x,v) &,& v(0)=v_0\in V
\end{matrix}
\right.
\end{equation}
 is supposed to have a unique solution, denoted by 
 \[x(t)=\pi(x_0,v_0,t),\]
 for every pair $(x_0,v_0)\in X\times V$ of initial conditions.  This means that, for fixed $x\in X$
 and $v\in V$, given any continuous map $f\colon{\mathbb{R}\to X}$, with continuous derivatives $f'\colon{\mathbb{R}\to V}$ and $f''\colon{\mathbb{R}\to \mathbb{R}^n}$, if \begin{equation}
 \label{eq-2}
 f(0)=x,\quad f'(0)=v,\quad f''(t)=g(f(t),f'(t))
\end{equation}  for every $t\in\mathbb{R}$, then 
\begin{equation}
f(t)=\pi(x,v,t).\label{eq-3}
\end{equation}
See Example \ref{ex13} for an illustration. Of course, the function $\pi$ has the following properties:
\begin{eqnarray}
\label{pi1} \pi(x,v,0)&=&x\\
\label{pi2} \pi'(x,v,0)&=&v\\
\label{pi3} \pi''(x,v,s)&=&g(\pi(x,v,s),\pi'(x,v,s)),
\end{eqnarray}
for every $s\in\mathbb{R}$, where $\pi'(x,v,s)$ stands for the derivative with respect to the variable~$s$.

\begin{lemma}\label{lemma 1}
Consider an initial value problem such as $(\ref{initial value problem})$. The following conditions hold for every $x\in X$, $v\in V$, and $s,t, u\in \mathbb{R}$:
\begin{eqnarray}
\label{lemma 1-1}\pi(x,0,t)&=&x\\
\label{lemma 1-2}\pi(x,v,s+u\,t)&=&\pi(\pi(x,v,s),u\,\pi'(x,v,s),t).
\end{eqnarray}
\end{lemma}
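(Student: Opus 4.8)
The plan is to derive both identities from the uniqueness of solutions to (\ref{initial value problem}), the whole force of the argument coming from the degree-two homogeneity (\ref{eq-1}) of $g$ in its velocity slot.

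For (\ref{lemma 1-1}) I would test the constant map $f(t)=x$ against the initial value problem. Then $f(0)=x$ and $f'(0)=0$, so the only condition left to check is $f''(t)=g(f(t),f'(t))$, i.e. $0=g(x,0)$. Setting $\lambda=0$ in (\ref{eq-1}) gives $g(x,0)=0$, so $f$ solves (\ref{initial value problem}) with initial data $(x,0)$; uniqueness (\ref{eq-3}) then forces $\pi(x,0,t)=f(t)=x$.

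For (\ref{lemma 1-2}) I fix $x,v,s,u$ and set $f(t)=\pi(x,v,s+u\,t)$. The strategy is to show that $f$ is the unique solution of (\ref{initial value problem}) with initial conditions $\pi(x,v,s)$ and $u\,\pi'(x,v,s)$, for then the right-hand side of (\ref{lemma 1-2}) equals $f(t)$ by (\ref{eq-3}). The chain rule gives $f'(t)=u\,\pi'(x,v,s+u\,t)$ and $f''(t)=u^2\,\pi''(x,v,s+u\,t)$, so the initial values $f(0)=\pi(x,v,s)$ and $f'(0)=u\,\pi'(x,v,s)$ drop out from (\ref{pi1}) and (\ref{pi2}); note that $f'(t)\in V$ since $V$ is a vector space and hence closed under the scalar $u$.

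The crux, and the only step beyond routine differentiation, is verifying that $f$ satisfies the differential equation. By (\ref{pi3}) one has $f''(t)=u^2\,g\bigl(\pi(x,v,s+u\,t),\,\pi'(x,v,s+u\,t)\bigr)$, whereas the equation demands $f''(t)=g(f(t),f'(t))=g\bigl(\pi(x,v,s+u\,t),\,u\,\pi'(x,v,s+u\,t)\bigr)$. These agree precisely by the homogeneity (\ref{eq-1}), which absorbs the factor $u^2$ into the rescaling $u\,\pi'$ of the velocity argument. This is the heart of the matter: homogeneity of degree $2$ in the velocity is exactly what makes the affine reparametrization $s\mapsto s+u\,t$ of a solution again a solution, and uniqueness then identifies the reparametrized curve with $\pi(\pi(x,v,s),u\,\pi'(x,v,s),t)$, completing the proof.
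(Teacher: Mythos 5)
Your proof is correct and follows essentially the same route as the paper's: test the constant map $f(t)=x$ and the reparametrized map $f(t)=\pi(x,v,s+u\,t)$ against the initial value problem and invoke uniqueness of solutions via (\ref{eq-3}). The paper states this in one line; you have merely made explicit the verification that the homogeneity condition (\ref{eq-1}) is what allows the factor $u^2$ from the chain rule to be absorbed, which is exactly the intended argument.
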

\begin{proof}
Considering the map $f(t)=x$ for the first case and the map $f(t)=\pi(x,v,s+ut)$ for the second case, $f$ satisfies the conditions (\ref{eq-2}) for each case. The desired equalities then follow from the uniqueness of solutions with respect to initial conditions, as expressed in condition~(\ref{eq-3}).
\end{proof}
\noindent Some useful particular cases of the previous lemma are:
\begin{eqnarray}
\label{lemma1-2}\pi(x,v,s\,t)&=&\pi(x,s\,v,t)\\
\pi(x,v,1-t)&=&\pi(\pi(x,v,1),-\pi'(x,v,1),t)\\
\label{lemma1-3}\pi(x,v,t)&=&\pi(\pi(x,v,s),(t-s)\pi'(x,v,s),1).
\end{eqnarray}
Property (\ref{lemma1-3}) follows from $\pi(x,v,t)=\pi(x,v,s+(t-s)\,1)$.

\begin{lemma}\label{lemma equivalent}
Consider an initial value problem such as $(\ref{initial value problem})$ and suppose the existence of a map $\beta\colon{X\times X\to V}$  such that 
\begin{equation}
\pi(x,\beta(x,y),1)=y,\label{lemmabeta1.1}\\
\end{equation}
for every $x,y\in X$. Then the following two conditions are equivalent:
\begin{enumerate}
\item[(a)] $\pi(x,v_1,1)=\pi(x,v_2,1)\Rightarrow v_1=v_2,\quad \forall x\in X, v_1,v_2\in V$
\item[(b)] the map $\beta$ with the property $(\ref{lemmabeta1.1})$ is unique.
\end{enumerate}
Moreover, we always have:
\[\beta(x,y_1)=\beta(x,y_2)\Rightarrow y_1=y_2.\]
\end{lemma}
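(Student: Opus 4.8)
The plan is to handle the unconditional \emph{moreover} assertion first, then prove the two implications of the equivalence separately, with the reverse implication (b)$\Rightarrow$(a) carrying essentially all of the content.

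For the final claim, I would argue directly from the defining property (\ref{lemmabeta1.1}) of $\beta$, which requires no hypothesis beyond the existence of $\beta$. Suppose $\beta(x,y_1)=\beta(x,y_2)$ for some $x,y_1,y_2\in X$; call this common value $v$. Applying $\pi(x,-,1)$ and using (\ref{lemmabeta1.1}) at both $(x,y_1)$ and $(x,y_2)$ gives $y_1=\pi(x,v,1)=y_2$, so $y_1=y_2$. This shows $\beta$ is always injective in its second argument, independently of conditions (a) and (b).

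For (a)$\Rightarrow$(b) I would take two maps $\beta_1,\beta_2$ both satisfying (\ref{lemmabeta1.1}). Then for every $x,y\in X$ we have $\pi(x,\beta_1(x,y),1)=y=\pi(x,\beta_2(x,y),1)$, and the injectivity hypothesis (a) applied with $v_1=\beta_1(x,y)$ and $v_2=\beta_2(x,y)$ forces $\beta_1(x,y)=\beta_2(x,y)$. Since $x,y$ are arbitrary, $\beta_1=\beta_2$, which is exactly the uniqueness asserted in (b).

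The substantive step is (b)$\Rightarrow$(a), which I would prove by contraposition: assuming (a) fails, I construct a second map satisfying (\ref{lemmabeta1.1}) distinct from the given $\beta$, contradicting (b). If (a) fails there exist $x\in X$ and $v_1\neq v_2$ in $V$ with $\pi(x,v_1,1)=\pi(x,v_2,1)=:y_0$. Since $v_1\neq v_2$, at least one of them differs from $\beta(x,y_0)$; say $v_1\neq\beta(x,y_0)$ after relabelling. I then define $\beta'\colon X\times X\to V$ to agree with $\beta$ everywhere except that $\beta'(x,y_0)=v_1$. Because $\pi(x,v_1,1)=y_0$, the new value still satisfies the defining equation at the single altered point, so $\beta'$ obeys (\ref{lemmabeta1.1}) for all pairs; yet $\beta'\neq\beta$, contradicting the uniqueness in (b). The only point deserving care is that this pointwise surgery genuinely produces an admissible map, which holds precisely because $v_1\in V$ and $\pi(x,v_1,1)=y_0$, so the constraint (\ref{lemmabeta1.1}) is untouched at every pair other than $(x,y_0)$ and is preserved at $(x,y_0)$ itself. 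I expect this surgery argument to be the main (though mild) obstacle, the rest being formal.
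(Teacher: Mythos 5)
Your proof is correct and follows essentially the same route as the paper: the same direct argument for the \emph{moreover} claim, the same argument for (a)$\Rightarrow$(b), and for (b)$\Rightarrow$(a) the same underlying idea, except that you make explicit (via contraposition and the pointwise surgery on $\beta$) the step that the paper compresses into ``by the uniqueness of $\beta$, we obtain $\beta(x,y)=v_1=v_2$''. Your explicit justification of why uniqueness of the map forces every $v$ with $\pi(x,v,1)=y$ to coincide with $\beta(x,y)$ is a welcome clarification rather than a deviation.
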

\begin{proof}
Assuming condition $(a)$, if there exists another map with the same property as $\beta$, say $\hat{\beta}\colon{X\times X\to V}$, then, for every $x,y\in X$, we have\[\pi(x,\beta(x,y),1)=y=\pi(x,\hat{\beta}(x,y),1).\]Now, from $(a)$ it immediately follows that 
$\beta=\hat{\beta}$.
Conversely, let be given any $x\in X$ and $v_1,v_2\in V$ such that $$\pi(x,v_1,1)=\pi(x,v_2,1)$$ and let us denote by $y\in X$, both $\pi(x,v_1,1)$ and  $\pi(x,v_2,1)$. We now consider $\beta(x,y)\in V$ and observe that also $\pi(x,\beta(x,y),1)=y$ and hence, by the uniqueness of $\beta$, we obtain \[\beta(x,y)=v_1=v_2,\] which concludes the first part of the proof.
For the remaining part we simply observe that if $\beta(x,y_1)=\beta(x,y_2)$ then \[y_1=\pi(x,\beta(x,y_1),1)=\pi(x,\beta(x,y_2),1)=y_2.\]
\end{proof}
An interesting consequence of Lemma \ref{lemma equivalent}, when applied together with property (\ref{lemma1-2}), is that within an initial value problem such as $(\ref{initial value problem})$ where there exists a unique $\beta$ verifying (\ref{lemmabeta1.1}), we have, for $s\neq 0$, $x\in X$ and $v_1,v_2\in V$,
\begin{equation}\label{pi-cancel}
\pi(x,v_1,s)=\pi(x,v_2,s)\Rightarrow \pi(x,s\,v_1,1)=\pi(x,s\,v_2,1)\Rightarrow v_1=v_2.
\end{equation}

\begin{lemma}\label{lemma non unique beta}\label{lemma 2}
Consider an initial value problem such as $(\ref{initial value problem})$. If there exists a unique $\beta\colon{X\times X\to V}$  such that, for all $x, y \in X$,
\begin{equation}
\pi(x,\beta(x,y),1)=y\label{lemmabeta1}
\end{equation}
then we also have:
\begin{eqnarray}
\beta(x,x)&=&0\label{lemmabeta4}\\
s\,\beta(x,y)&=&\beta(x,\pi(x,\beta(x,y),s))\label{lemmabeta3}\\
\beta(\pi(x,v,s),\pi(x,v,t))&=&(t-s) \pi'(x,v,s)\label{lemmabeta5}.
\end{eqnarray}
for every $x\in X$, $v\in V$ and $t,s\in\mathbb{R}$.
\end{lemma}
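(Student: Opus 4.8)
The plan is to derive all three identities from a single mechanism. Since $\beta$ is assumed to be unique, Lemma~\ref{lemma equivalent} guarantees that condition $(a)$ holds, that is, the cancellation law
\[
\pi(x,v_1,1)=\pi(x,v_2,1)\ \Rightarrow\ v_1=v_2
\]
is available. By the defining property (\ref{lemmabeta1}), $\beta(x,z)$ is a vector whose image under $\pi(x,-,1)$ equals $z$; the cancellation law therefore lets me identify $\beta(x,z)$ with \emph{any} vector I can exhibit having that same image. So for each equation it suffices to produce such a witness using identities already established for $\pi$.

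First, for (\ref{lemmabeta4}), evaluating (\ref{lemma 1-1}) at $t=1$ gives $\pi(x,0,1)=x$; comparing with $\pi(x,\beta(x,x),1)=x$ and cancelling yields $\beta(x,x)=0$. Next, for (\ref{lemmabeta3}), write $v=\beta(x,y)$ and use the scaling identity (\ref{lemma1-2}) with $t=1$, namely $\pi(x,s\,v,1)=\pi(x,v,s)$; this exhibits $s\,v$ as a witness for $\beta\big(x,\pi(x,v,s)\big)$, so cancellation gives $s\,\beta(x,y)=\beta\big(x,\pi(x,\beta(x,y),s)\big)$. Finally, for (\ref{lemmabeta5}), set $a=\pi(x,v,s)$ and $b=\pi(x,v,t)$ and invoke (\ref{lemma1-3}) in the form $\pi(x,v,t)=\pi\big(a,(t-s)\pi'(x,v,s),1\big)$; this presents $(t-s)\pi'(x,v,s)$ as a witness for $\beta(a,b)$, and cancellation delivers the identity.

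The argument is short, and its only genuine content is the observation that uniqueness of $\beta$ is precisely what supplies the cancellation law through Lemma~\ref{lemma equivalent}; after that each case is a one-line application of a recorded property of $\pi$. The step most prone to error is the third one, where one must select the correct reparametrization (\ref{lemma1-3}) and keep track of the fact that the base point of $\beta$ has shifted from $x$ to $\pi(x,v,s)$ --- an index or sign slip in the coefficient $(t-s)\pi'(x,v,s)$ is the likeliest place to go wrong.
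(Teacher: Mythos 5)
Your proposal is correct and follows essentially the same route as the paper: each identity is obtained by exhibiting a second vector with the same image under $\pi(\cdot,-,1)$ (via (\ref{lemma 1-1}), (\ref{lemma1-2}) and (\ref{lemma1-3}) respectively) and then cancelling. The only difference is cosmetic --- you explicitly route the cancellation law through Lemma~\ref{lemma equivalent}, whereas the paper appeals directly to the unicity of $\beta$; both amount to the same thing.
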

\begin{proof}
From (\ref{lemma 1-1}), we have $\pi(x,0,1)=x$ and so $\beta(x,x)=0$ because $\beta(x,x)$ is the unique element in $V$ with the property $$\pi(x,\beta(x,x),1)=x.$$
Similarly we conclude, from (\ref{lemma1-2}) with $t=1$, that (\ref{lemmabeta3}) holds, 
indeed $\beta(x,\pi(x,\beta(x,y),s))$ is the unique element in $V$ such that 
\[\pi(x,\beta(x,\pi(x,\beta(x,y),s)),1)=\pi(x,\beta(x,y),s).\]
Finally, we observe that from (\ref{lemmabeta1}) we get:
\[\pi(\pi(x,v,s),\beta(\pi(x,v,s),\pi(x,v,t)),1)=\pi(x,v,t)\]
which, using (\ref{lemma1-3}), implies
\begin{eqnarray*}
&&\pi(\pi(x,v,s),(t-s)\pi'(x,v,s),1)\\
&=&\pi(\pi(x,v,s),\beta(\pi(x,v,s),\pi(x,v,t)),1).
\end{eqnarray*}
Therefore, the unicity of $\beta$ proves (\ref{lemmabeta5}).
\end{proof}
\noindent A particular case of the previous lemma is:
\begin{eqnarray}
\pi'(x,\beta(x,y),1)&=&-\beta(y,x)\label{lemmabeta2}.
\end{eqnarray}

\begin{theorem}\label{prop unique beta}
Consider an initial value problem such as $(\ref{initial value problem})$ in which there is a unique $\beta\colon{X\times X\to V}$  such that $(\ref{lemmabeta1})$ holds. Then, the structure $(X,q)$, with \[q(x,t,y)=\pi(x,\beta(x,y),t)\] is a mobi space over the mobi algebra $([0,1],p,0,\frac{1}{2},1)$, where \[p(s,u,t)=s+u\,(t-s).\]
\end{theorem}
\begin{proof}
\ref{space_0} is simply a consequence of (\ref{pi1}) and \ref{space_1} of (\ref{lemmabeta1}):
\begin{eqnarray*}
q(x,0,y)=\pi(x,\beta(x,y),0)&=&x,\\
q(x,1,y)=\pi(x,\beta(x,y),1)&=&y.
\end{eqnarray*}
To prove \ref{space_idem}, we use (\ref{lemmabeta4}) and (\ref{lemma 1-1}):
\[q(x,t,x)=\pi(x,\beta(x,x),t)=\pi(x,0,t)=x.\]
Lemma \ref{lemma equivalent} and property (\ref{pi-cancel}) imply \ref{space_cancel}:
\begin{eqnarray*}
q(x,\muu,y_1)=q(x,\muu,y_2)&\implies& \pi(x,\beta(x,y_1),\muu)=\pi(x,\beta(x,y_2),\muu)\\
                           &\implies& \beta(x,y_1)=\beta(x,y_2)\\
													 &\implies& y_1=y_2.
\end{eqnarray*}
Using (\ref{lemma 1-2}) and (\ref{lemmabeta5}), we obtain \ref{space_homo}:
\begin{eqnarray*}
&&q(q(x,s,y),u,q(x,t,y))\\
&=& \pi(\pi(x,\beta(x,y),s),\beta(\pi(x,\beta(x,y),s),\pi(x,\beta(x,y),t)),u)\\
&=& \pi(\pi(x,\beta(x,y),s),(t-s)\pi'(x,\beta(x,y),s),u)\\
&=& \pi(x,\beta(x,y),s+u\,(t-s))\\
&=& q(x,p(s,u,t),y).
\end{eqnarray*}

\end{proof}

Note that, when $s\neq 0$, from $(\ref{pi-cancel})$, we also deduce that $q(x,s,y_1)=q(x,s,y_2)$ implies  $y_1=y_2$.

Furthermore, in this case, the property (\ref{eq: conjecture}) is verified.

\begin{corollary}\label{corollary-geo} If $S\subseteq \mathbb{R}^n$ is a Riemann surface with a unique geodesic path between any two points, then $(S,q)$ is a mobi space with $q(x,t,y)$ being the position at an instant $t$ on the geodesic path between $x$ and $y$.
\end{corollary}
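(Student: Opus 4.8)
The plan is to read Corollary~\ref{corollary-geo} as a direct application of Theorem~\ref{prop unique beta}: I need only exhibit the geodesic flow on $S$ as an initial value problem of the form (\ref{initial value problem}) satisfying the homogeneity condition (\ref{eq-1}), and then show that the hypothesis of a unique geodesic path between any two points is precisely what supplies the map $\beta$ and its uniqueness required by the theorem.

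First I would write the geodesic equation in local coordinates on $S$. Taking $V\subseteq\R^n$ to be the space of velocity vectors, the geodesics of the (induced Riemannian) metric are the solutions of the second order system $\ddot x^k=-\Gamma^k_{ij}(x)\,\dot x^i\dot x^j$, where the $\Gamma^k_{ij}$ are the Christoffel symbols. Setting $g(x,v)^k=-\Gamma^k_{ij}(x)\,v^i v^j$, the system becomes $x'=v$, $v'=g(x,v)$, which is exactly (\ref{initial value problem}). Since $g$ is quadratic, hence homogeneous of degree two, in the velocity, it satisfies $g(x,\lambda v)=\lambda^2 g(x,v)$, i.e. condition (\ref{eq-1}). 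Smoothness of the metric makes the $\Gamma^k_{ij}$ smooth, so by the Picard--Lindel\"of theorem the problem has a unique solution $\pi(x_0,v_0,t)$ for each pair of initial conditions, as demanded by the hypotheses preceding the theorem. The passage between the intrinsic surface $S$ and the ambient-$\R^n$ formulation of the theorem is handled by working in a coordinate chart, where the equation takes the stated form.

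Next I would produce $\beta$ and verify its uniqueness. Fix $x,y\in S$ and parametrize the unique geodesic path from $x$ to $y$ with constant speed over $[0,1]$ (any solution of the geodesic equation automatically has constant speed), so that it leaves $x$ at $t=0$ and reaches $y$ at $t=1$; let $\beta(x,y)\in V$ be its initial velocity. Then $\pi(x,\beta(x,y),1)=y$, which is (\ref{lemmabeta1}). To obtain uniqueness I would verify condition $(a)$ of Lemma~\ref{lemma equivalent}: if $\pi(x,v_1,1)=y=\pi(x,v_2,1)$, then $t\mapsto\pi(x,v_1,t)$ and $t\mapsto\pi(x,v_2,t)$ are two constant-speed geodesics from $x$ to $y$ on $[0,1]$; uniqueness of the geodesic path forces their images to coincide, hence they have equal length and therefore equal speeds $|v_1|=|v_2|$, and a constant-speed parametrization of a fixed path with fixed initial point is unique, so $v_1=v_2$. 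By Lemma~\ref{lemma equivalent} the map $\beta$ satisfying (\ref{lemmabeta1}) is then unique.

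With these pieces in place, Theorem~\ref{prop unique beta} applies verbatim and yields that $(S,q)$ with $q(x,t,y)=\pi(x,\beta(x,y),t)$ is a mobi space over $([0,1],p,0,\frac{1}{2},1)$, and by construction $q(x,t,y)$ is the position at instant $t$ on the unique geodesic from $x$ to $y$, which is the claim. I expect the main obstacle to be the careful justification of the uniqueness of $\beta$ from the uniqueness of the geodesic \emph{path}: one must argue that two affinely parametrized geodesics sharing the same image, the same starting point, and the same parameter interval $[0,1]$ actually coincide as parametrized curves, ruling out reparametrizations. This is exactly the point where the degree-two homogeneity (\ref{eq-1}), through its consequence (\ref{lemma1-2}), is indispensable, since it is what makes the correspondence between initial velocities and endpoints rigid enough to deduce $v_1=v_2$.
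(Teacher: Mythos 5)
Your proposal is correct and follows essentially the same route as the paper: identify $g_k(x,v)=-\sum_{i,j}\Gamma^k_{ij}(x)v^iv^j$ so that condition (\ref{eq-1}) holds by quadratic homogeneity, and derive the existence and uniqueness of $\beta$ from the uniqueness of geodesic paths, after which Theorem~\ref{prop unique beta} applies. The only difference is one of detail: the paper simply asserts that unique geodesic paths imply the uniqueness of $\beta$, whereas you spell out that step via Lemma~\ref{lemma equivalent}(a) and the constant-speed parametrization argument, which is a welcome elaboration rather than a deviation.
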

\begin{proof} If  $\Gamma_{ij}^{k}\colon{S\to \mathbb{R}}$  are the Christoffel symbols (see \cite{Ryder}, for instance) for the metric, then the function $g$ in the initial value problem (\ref{initial value problem}) is of the form \[g_k(x,y)=-\sum_{i,j}y_{i}\Gamma_{ij}^{k}(x)y_j\] and henceforth satisfies  (\ref{eq-1}). The existence of unique geodesic paths between any two points implies the uniqueness of $\beta$ in Theorem \ref{prop unique beta}.
\end{proof}

\begin{corollary}\label{corollary-xdot}
Consider the functions $f:\mathbb{R}^n\times \mathbb{R}^n\times \mathbb{R}\to \mathbb{R}^n$ and $x: \mathbb{R}\to\mathbb{R}^n$, and let $\dot x$ and $\ddot x$ be the first and second derivatives of $x$ with respect to the variable $t\in\mathbb{R}$. If the following problem
\begin{equation}\label{xdot-problem}
\left\lbrace\begin{matrix}
\ddot x=f(x,\dot x,t)\\
x(t_1)=x_1\\
x(t_2)=x_2
\end{matrix}
\right.
\end{equation}
has a unique solution for any $x_1, x_2\in\mathbb{R}^n$ and $t_1, t_2\in\mathbb{R}$,  $t_2\neq t_1$, expressed as $x(t)=F(x_1,t_1,x_2,t_2,t)$, then $(\mathbb{R}^{n+1},q)$ is a mobi space over the mobi algebra $([0,1],p,0,\frac{1}{2},1)$, with
$$p\left(t_1,s,t_2\right)=t_1+s(t_2-t_1),$$
$$q\left((x_1,t_1),s,(x_2,t_2)\right)=\left(F(x_1,t_1,x_2,t_2,p(t_1,s,t_2)),p(t_1,s,t_2)\right).$$
\end{corollary}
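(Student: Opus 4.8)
The plan is to deduce the statement from Theorem~\ref{prop unique beta} by carrying out the announced device of treating time as an extra geometric coordinate. I work in $\mathbb{R}^{n+1}$, writing a point as a pair $(\xi,\eta)$ with $\xi\in\mathbb{R}^n$ the spatial part and $\eta\in\mathbb{R}$ the time, and I take the space of velocities to be $V=\mathbb{R}^{n+1}$, with a velocity written $(v,w)$. The decisive first step is to produce a field $\tilde g\colon \mathbb{R}^{n+1}\times\mathbb{R}^{n+1}\to\mathbb{R}^{n+1}$ of the type required in $(\ref{initial value problem})$ whose trajectories project onto the solutions of $\ddot x=f(x,\dot x,t)$ and which, crucially, satisfies the homogeneity condition $(\ref{eq-1})$. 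I would set
\[\tilde g\big((\xi,\eta),(v,w)\big)=\big(w^2 f(\xi,v/w,\eta),\,0\big)\quad\text{for }w\neq 0,\]
and $\tilde g((\xi,\eta),(v,0))=(0,0)$. The point of the factor $w^2$ is exactly that it yields $\tilde g((\xi,\eta),\lambda(v,w))=\lambda^2\tilde g((\xi,\eta),(v,w))$, so that $(\ref{eq-1})$ holds; this is the whole trick. Moreover the time component of $\tilde g$ is identically zero, so $w$ is conserved along every trajectory and the flow splits into the two sectors $w\neq 0$ and $w=0$, which never interact.

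In the second step I would check that, in the sector $w_0\neq 0$, the solutions $\pi$ of the initial value problem for $\tilde g$ are precisely the time-parametrized graphs of solutions of the mechanical equation. Since the time component of the acceleration vanishes, $\eta(\sigma)=t_1+w_0\sigma$ is affine; writing the spatial part as a function $X$ of the physical time $\tau=t_1+w_0\sigma$ and applying the chain rule turns $\xi''=w_0^2 f(\xi,\xi'/w_0,\eta)$ into $X''(\tau)=f(X(\tau),X'(\tau),\tau)$, i.e. exactly the equation of $(\ref{xdot-problem})$. Hence
\[\pi\big((x_1,t_1),(v_0,w_0),\sigma\big)=\big(X(t_1+w_0\sigma),\,t_1+w_0\sigma\big),\]
where $X$ solves the ODE with $X(t_1)=x_1$ and $X'(t_1)=v_0/w_0$.

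The third step identifies the connecting map $\beta$ and matches the formula. Given $P_1=(x_1,t_1)$ and $P_2=(x_2,t_2)$, reaching $P_2$ at $\sigma=1$ forces $w_0=t_2-t_1$ in order to hit the correct time, and then, when $t_1\neq t_2$, the spatial initial velocity is pinned down as $v_0=(t_2-t_1)\,\partial_t F(x_1,t_1,x_2,t_2,t_1)$ because the connecting trajectory must be the boundary-value solution $F$; when $t_1=t_2$ one lies in the $w=0$ sector and $\beta=(x_2-x_1,0)$. In either case $\beta$ is the unique velocity with $\pi(P_1,\beta(P_1,P_2),1)=P_2$, the uniqueness in the generic case being exactly the assumed uniqueness of the solution of $(\ref{xdot-problem})$; this is where the hypothesis is consumed. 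Feeding $\pi$ and $\beta$ into Theorem~\ref{prop unique beta} yields a mobi space over $([0,1],p,0,\frac{1}{2},1)$ with $q(P_1,s,P_2)=\pi(P_1,\beta(P_1,P_2),s)$, and substituting the expression for $\pi$ gives
\[\pi\big(P_1,\beta(P_1,P_2),s\big)=\big(F(x_1,t_1,x_2,t_2,p(t_1,s,t_2)),\,p(t_1,s,t_2)\big),\]
with $p(t_1,s,t_2)=t_1+s(t_2-t_1)$, which is precisely the asserted formula and exactly the affine $p$ delivered by the theorem.

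The step I expect to be the main obstacle is the verification of the hypotheses of Theorem~\ref{prop unique beta} from the data actually given. Two delicate points arise. First, the theorem presupposes that the initial value problem $(\ref{initial value problem})$ has unique solutions, whereas $(\ref{xdot-problem})$ only postulates uniqueness of the boundary value problem; boundary-value uniqueness does not by itself force initial-value uniqueness, so one must either add this (automatic in the smooth mechanical examples by Picard--Lindel\"of) or restrict to such $f$, after which axioms~\ref{space_0}--\ref{space_homo} all come packaged in the theorem, cancellation~\ref{space_cancel} flowing from $(\ref{pi-cancel})$ and $(\ref{lemma1-2})$. Second, the diagonal $t_1=t_2$ is genuinely degenerate: there $F$ is undefined and $\tilde g$ fails to be continuous, so the flow reduces to the fictitious constant-time straight lines of the $w=0$ sector; with the extension $\tilde g(\,\cdot\,,(v,0))=0$ this still produces a unique $\beta$ and the consistent value $q((x,t),s,(x,t))=(x,t)$ demanded by idempotency~\ref{space_idem}, so the structure remains a genuine mobi space on all of $\mathbb{R}^{n+1}$, with the displayed $F$-formula describing the generic sector $t_1\neq t_2$. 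As a self-contained alternative that avoids the flow $\pi$ altogether, one can verify \ref{space_0}--\ref{space_homo} directly from the boundary values of $F$, the self-similarity relation $F(F(\mathbf{x},\tau_a),\tau_a,F(\mathbf{x},\tau_b),\tau_b,t)=F(\mathbf{x},t)$ (a consequence of boundary-value uniqueness, writing $\mathbf{x}=(x_1,t_1,x_2,t_2)$), and the affine identity $p(p(t_1,a,t_2),b,p(t_1,c,t_2))=p(t_1,p(a,b,c),t_2)$, with \ref{space_homo} following by restricting the trajectory to the subinterval cut out by the two intermediate times.
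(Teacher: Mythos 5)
Your proposal is correct and follows essentially the same route as the paper: the paper's proof likewise reparametrizes via $t=p(t_1,s,t_2)$, applies the chain rule to get $x''=t'^2\ddot x$, forms $g((x,t),(x',t'))=(t'^2 f(x,\tfrac{x'}{t'},t),0)$, observes the homogeneity $g(\chi,\lambda\omega)=\lambda^2 g(\chi,\omega)$, and invokes Theorem~\ref{prop unique beta}. The only difference is that you spell out two points the paper compresses into the single sentence asserting an ``equivalent initial value problem'' --- the extension of $g$ to the $t'=0$ sector (where the displayed formula divides by $t'$ and $F$ is undefined) and the fact that uniqueness for the boundary-value problem must be supplemented by initial-value uniqueness before $\pi$, and hence Theorem~\ref{prop unique beta}, is available --- so your more cautious reading identifies exactly what the paper's argument silently assumes.
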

\begin{proof} Considering the variable $s$ such that $p(t_1,s,t_2)=t$, and denoting by $x'$ and $x''$ the first and second derivatives of $x$ with respect to s, we get $x'=t' \dot x$ and $x''=t'^2 \ddot x$. With $\chi=(x,t)\in\mathbb{R}^{n+1}$ and $\omega=(x',t')$, we then have:
\begin{equation*}
\left\lbrace\begin{matrix}
\chi'=\omega\\
\omega'=g(\chi,\omega)
\end{matrix}
\right.\ , \textrm{with } g((x,t),(x',t'))=(t'^2 f(x,\frac{x'}{t'},t),0).
\end{equation*}
It is obvious that $g(\chi,\lambda \omega)=\lambda^2 g(\chi,\omega)$. The unicity of a solution of $(\ref{xdot-problem})$ implies that there exists a initial value problem (at the initial value $s=0$), such as $(\ref{initial value problem})$, equivalent to $(\ref{xdot-problem})$. Therefore, Theorem \ref{prop unique beta} can be applied and proves this Corollary.
\end{proof}
The mobi spaces of Examples \ref{projectiles}  and \ref{ex:fxdot} are illustrations of the result of  Corollary \ref{corollary-xdot}.

\begin{example}\label{ex13}

To illustrate the result of Theorem \ref{prop unique beta}, consider the explicit example where $X=\R^2$, $V=\R^2$ and, with $k\in R$,
\begin{eqnarray*}
g:X\times V &\rightarrow & \mathbb{R}^2\\
((a_1,a_2),(w_1,w_2) )&\rightarrow & (-2 k w_2^2,0).
\end{eqnarray*}
The solution of the following initial value problem, involving the functions $x:\mathbb{R}\rightarrow X$ and $v:\mathbb{R}\rightarrow V$,
\begin{equation}
\left\lbrace\begin{matrix}
x'=v      &,& x(0)=(x_1,x_2)\in X\\
v'=g(x,v) &,& v(0)=(v_1,v_2)\in V
\end{matrix}
\right.
\end{equation}
is
$$ \pi((x_1,x_2),(v_1,v_2),t)=(x_1+v_1 t-k v_2^2 t^2,x_2+v_2 t)$$
It is easy to see that there is a unique $\beta:X\times X\rightarrow V$ such that
$$\pi((x_1,x_2),\beta((x_1,x_2),(y_1,y_2)),1)=(y_1,y_2)$$ given by:
$$\beta((x_1,x_2),(y_1,y_2))=(y_1-x_1+k (y_2-x_2)^2,y_2-x_2).$$
We then obtain the ternary operation $q:X\times [0,1]\times X\rightarrow X$ of the mobi space $(X,q)$ which is given by:
\begin{eqnarray*}
&&q((x_1,x_2),t,(y_1,y_2))=\pi((x_1,x_2),\beta((x_1,x_2),(y_1,y_2)),t)\\
&=&(x_1+(y_1-x_1) t+k (y_2-x_2)^2 (t-t^2), x_2+(y_2-x_2)t).
\end{eqnarray*}
This is, in fact, Example \ref{ex11}. For an explicit example of geodesics on a two dimensional surface, we refer to \cite{scripta2}.
\end{example}

Metric spaces in which there is a unique geodesic through any two points have been characterized in \cite{busemann-1943} (see also \cite{Kock}).

\subsection{The cylinder as an identification space and its geodesics}\label{subsec: identification spaces}
If the uniqueness of the geodesic paths in a Riemannian surface is not guaranteed, we may still choose appropriate values for the map $\beta$ as in  Proposition~\ref{prop unique beta} and obtain a mobi space. For example, although, on a cylinder,  there are infinitely many geodesics through two points that do not lie on the same circle, we can construct mobi spaces over the set of points of a cylinder as illustrated in Example \ref{ex5to9}(\ref{ex5}) of Section  \ref{sec_examples}. See, for example \cite{busemann-1955} for the study of spaces in which a geodesic through two points is not unique.

\begin{proposition}\label{prop:sphere}
Let $(X,q)$ be a mobi space over the mobi algebra $(A,p,0,\muu,1)$ and $h\colon{X\to S}$ a map onto a set $S$. If we can find two maps $s\colon{S\to X}$ and $\theta\colon{S\times S\to X}$, satisfying the following conditions for every $u,v,v'\in S$ $a_1,a_2,a_3\in A$
\begin{eqnarray*}
hs(u) &=& u\\
h\theta(u,v) &=& v\\
\theta(u,u)&=&s(u)\\
  hq(s(u),\muu,\theta(u,v))&=&hq(s(u),\muu,\theta(u,v'))\Rightarrow  v=v' \\
hq(s(u),p(a_1,a_2,a_3),\theta(u,v))&=& hq(shq(s(u),a_1,\theta(u,v)),a_2,\cdots\\
&& \theta(hq(s(u),a_1,\theta(u,v)),hq(s(u),\cdots\\
&& a_3,\theta(u,v))))
\end{eqnarray*}
then $(S,q_S)$ is a mobi space over $A$ with $$q_S(u,a,v)=hq(s(u),a,\theta(u,v)),$$ for every $u,v\in S$ and $a\in A$. 
\end{proposition}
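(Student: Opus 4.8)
The plan is to verify, one at a time, the five defining axioms \ref{space_0}--\ref{space_homo} of Definition \ref{mobi_space} for the candidate structure $(S,q_S)$, drawing in each case on the corresponding axiom for the given mobi space $(X,q)$ together with one of the five structural conditions imposed on $s$, $\theta$ and $h$. The point of the argument is that these five conditions were evidently tailored to match the five axioms, so the whole proof is a direct substitution; the only genuine labour is the careful unwinding of nested terms in the last axiom. Note that the map $q_S(u,a,v)=hq(s(u),a,\theta(u,v))$ is everywhere defined since $s$ and $\theta$ are total.

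First I would dispatch the easy four. For \ref{space_0}, I expand $q_S(u,0,v)=hq(s(u),0,\theta(u,v))$, collapse the inner term to $s(u)$ by axiom \ref{space_0} for $(X,q)$, and conclude with $hs(u)=u$. Symmetrically for \ref{space_1}, axiom \ref{space_1} for $(X,q)$ reduces $q(s(u),1,\theta(u,v))$ to $\theta(u,v)$, and $h\theta(u,v)=v$ finishes. For \ref{space_idem} I would use $\theta(u,u)=s(u)$ to rewrite $q_S(u,a,u)=hq(s(u),a,s(u))$, apply the idempotency axiom \ref{space_idem} of $(X,q)$, and again use $hs(u)=u$. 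The cancellation axiom \ref{space_cancel} is immediate: unwinding $q_S(u,\muu,v_1)=q_S(u,\muu,v_2)$ yields exactly $hq(s(u),\muu,\theta(u,v_1))=hq(s(u),\muu,\theta(u,v_2))$, which is the premise of the fourth structural condition, and that condition delivers $v_1=v_2$.

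The heart of the matter is \ref{space_homo}. Here I would set $u'=q_S(u,a,v)=hq(s(u),a,\theta(u,v))$ and $w=q_S(u,c,v)=hq(s(u),c,\theta(u,v))$, and then expand $q_S(u',b,w)=hq(s(u'),b,\theta(u',w))$ straight from the definition of $q_S$. Substituting the expressions for $u'$ and $w$ turns the left-hand side of \ref{space_homo} into precisely the right-hand side of the fifth structural condition (taking $a_1=a$, $a_2=b$, $a_3=c$), whose left-hand side is $hq(s(u),p(a,b,c),\theta(u,v))=q_S(u,p(a,b,c),v)$. Hence \ref{space_homo} holds verbatim, with no appeal to any extra axiom of $(X,q)$ beyond what is already baked into the fifth condition.

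The hard part, such as it is, will be purely notational bookkeeping in this last step: one must track the nested occurrences so that $s(u')$ becomes $s(hq(s(u),a,\theta(u,v)))$ and $\theta(u',w)$ becomes $\theta(hq(s(u),a,\theta(u,v)),hq(s(u),c,\theta(u,v)))$, matching the fifth hypothesis symbol for symbol. Since the structural conditions are reverse-engineered from exactly these computations, I expect no obstruction of a conceptual nature — the surjectivity of $h$ is automatic from the existence of the section $s$, and is not otherwise invoked.
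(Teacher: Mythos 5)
Your proposal is correct and follows exactly the route the paper takes: the paper's own proof simply states that the axioms of a mobi space follow from the conditions assumed on $h$, $s$ and $\theta$, and your verification of \ref{space_0}--\ref{space_homo} axiom by axiom (matching each to its tailored structural condition, with the substitution $a_1=a$, $a_2=b$, $a_3=c$ in the fifth) is precisely the computation being left implicit there.
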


\begin{proof}
It is clear that the axioms of a mobi space follow from the conditions that are assumed to be satisfied by the three maps.
\end{proof}

As an example we show how to obtain geodesic paths on a cylinder from geodesic paths on a plane.

\begin{example}\label{ex14}
 let $(X,q)$ be the euclidean plane with the usual geodesic paths over the mobi algebra of the unit interval. Denote by $S$ the set $[0,2\pi[\times \mathbb{R}$ and define three maps: $h$, $s$ and $\theta$. The map $h\colon{X\to S}$ is defined as $h(x,y)=(x\mod 2\pi, y)$, the map  $s\colon{S\to X}$ is the inclusion map and $\theta\colon{S\times S\to X}$ is defined as
\begin{equation*}
\theta((x_1,y_1),(x_2,y_2))=
\left\{\begin{array}{lcl}
(x_2+2\pi,y_2) & \text{if} &  x_2-x_1< -\pi\\
(x_2,y_2) & \text{if} & -\pi\leq x_2-x_1\leq \pi\\
(x_2-2\pi,y_2) & \text{if} &  \pi < x_2-x_1
\end{array}\right..
\end{equation*}
It can be checked that these maps satisfy the conditions of the previous proposition and hence they define a mobi space on the set $S$. 
\end{example}

The resulting mobi space structure in this case is not the same as the ones presented in Example \ref{ex5to9}(\ref{ex5}). Indeed, in this case, we obtain a parametrization for the cylinder which induces, via the mobi space, the shortest path between any two points. That was not the case in Example \ref{ex5to9}(\ref{ex5}).


\section{Comparison  with R-modules}\label{sec:modules}

Consider a unitary ring $(R,+,\cdot,0,1)$. It has been proven \cite{mobi} that if $R$ contains the inverse of $1+1$, then it is a mobi algebra and if a mobi algebra $(A,p,0,\muu,1)$ contains the inverse of $\muu$, in the sense of (\ref{def_product}), then it is a unitary ring. In this section, we will compare a module over a ring $R$ with a mobi space over a mobi algebra A. First, let us just recall that a module over a ring $R$ is a system $(M,+,e,\varphi)$, where $\varphi:R\to End(M)$ is a map from $R$ to the usual ring of endomorphisms, such that $(M,+,e)$ is an abelian group and $\varphi$ is a ring homomorphism.

The following Theorem shows how to construct a mobi space from a module over a ring containing the inverse of $2$.
\begin{theorem}\label{module2mobi}
Consider a module $(X,+,e,\varphi)$ over a unitary ring $(A,+,\cdot,0,1)$. If $A$ contains $(1+1)^{-1}=\muu$ then $(X,q)$ is an affine mobi space over the mobi algebra $(A,p,0,\muu,1)$, with
\begin{eqnarray}
p(a,b,c)&=&a+b c-b a\\
q(x,a,y)&=&\varphi_{1-a}(x)+\varphi_a(y).\label{q}
\end{eqnarray}
\end{theorem}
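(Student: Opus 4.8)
The plan is to treat the five mobi-space axioms \ref{space_0}--\ref{space_homo} together with the affine identity (\ref{eq: affine mobi space}) one at a time, using only that $\varphi\colon A\to End(X)$ is a ring homomorphism and that $(X,+,e)$ is an abelian group. Since $A$ is a unitary ring containing $\muu=(1+1)^{-1}$, the cited result from \cite{mobi} already guarantees that $(A,p,0,\muu,1)$ with $p(a,b,c)=a+bc-ba$ is a mobi algebra, so that part requires nothing new. Throughout I would record the standing facts $\varphi_0=0$, $\varphi_1=\mathrm{id}$, $\varphi_{a+b}=\varphi_a+\varphi_b$, $\varphi_{ab}=\varphi_a\varphi_b$, together with two arithmetic identities: $1-\muu=\muu$ (because $\muu+\muu=1$), and the centrality of $\muu$, which holds because $1+1$ commutes with every element of $A$ and hence so does its inverse $\muu$.

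The first three axioms are immediate. For \ref{space_0} and \ref{space_1} I would substitute $a=0$ and $a=1$ and use $\varphi_0=0$, $\varphi_1=\mathrm{id}$ to get $q(x,0,y)=x+e=x$ and $q(x,1,y)=e+y=y$. For \ref{space_idem} the additivity of $\varphi$ in the scalar gives $q(x,a,x)=\varphi_{1-a}(x)+\varphi_a(x)=\varphi_{(1-a)+a}(x)=\varphi_1(x)=x$. For the cancellation axiom \ref{space_cancel}, after cancelling the common summand $\varphi_{1-\muu}(x)$ in the abelian group I am reduced to $\varphi_\muu(y_1)=\varphi_\muu(y_2)\Rightarrow y_1=y_2$; this holds because $\varphi_\muu$ is invertible with inverse $\varphi_{1+1}$ (indeed $\varphi_\muu\varphi_{1+1}=\varphi_{\muu(1+1)}=\varphi_1=\mathrm{id}$), hence injective.

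The homomorphism axiom \ref{space_homo} is the first genuine computation. I would expand $q(q(x,a,y),b,q(x,c,y))$ by distributing $\varphi_{1-b}$ and $\varphi_b$ over the inner sums and collapsing composites via $\varphi_{ab}=\varphi_a\varphi_b$, obtaining a sum of four terms whose $x$- and $y$-coefficients are $(1-b)(1-a)+b(1-c)$ and $(1-b)a+bc$ respectively. Expanding these, and keeping the order of products since $A$ need not be commutative, yields $1-a+ba-bc$ and $a-ba+bc$, which are exactly $1-p(a,b,c)$ and $p(a,b,c)$ for $p(a,b,c)=a+bc-ba$. Thus \ref{space_homo} reduces to matching the defining formula for $p$, and no commutativity is needed here.

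The main point, and the step I expect to use the hypothesis on $\muu$ in an essential way, is the affine identity (\ref{eq: affine mobi space}). Expanding both sides by the same additivity-and-composition bookkeeping, and using $1-\muu=\muu$ to rewrite every occurrence, the left side becomes $\varphi_{\muu(1-a)}(x_1)+\varphi_{\muu a}(y_1)+\varphi_{\muu(1-a)}(x_2)+\varphi_{\muu a}(y_2)$, while the right side produces the same four terms but with scalars $(1-a)\muu$ and $a\muu$. These agree term by term precisely because $\muu$ is central, so that $\muu(1-a)=(1-a)\muu$ and $\muu a=a\muu$. Hence $(X,q)$ satisfies (\ref{eq: affine mobi space}) and is an affine mobi space. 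The only delicate point is this appeal to the centrality of $\muu$; everything else is formal manipulation in the abelian group $X$ via the ring-homomorphism identities.
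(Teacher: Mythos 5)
Your proposal is correct and follows essentially the same route as the paper: a direct axiom-by-axiom verification using $\varphi_0=0$, $\varphi_1=\mathrm{id}$, $\varphi_{a+b}=\varphi_a+\varphi_b$, $\varphi_{ab}=\varphi_a\varphi_b$ and $\muu+\muu=1$, with the homomorphism axiom reducing to the coefficient identities $(1-b)(1-a)+b(1-c)=1-p(a,b,c)$ and $(1-b)a+bc=p(a,b,c)$, and the affine identity reducing to commuting $\muu$ past $a$ and $1-a$. If anything, you are slightly more careful than the paper, which silently passes from $\varphi_{\muu}\circ\varphi_{1-a}=\varphi_{\muu(1-a)}$ to $\varphi_{(1-a)\muu}=\varphi_{1-a}\circ\varphi_{\muu}$ without recording your (correct) justification that $\muu$, being the inverse of the central element $1+1$, is itself central.
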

\begin{proof}
$(A,p,0,\muu,1)$ is a mobi algebra by Theorem 7.2 of \cite{mobi}. We show here that the axioms of Definition \ref{mobi_space}, as well as (\ref{affine}), are verified. The first three axioms are easily proved:
\begin{eqnarray*}
q(x,0,y)&=&\varphi_1(x)+\varphi_0(y)=x+ e=x\\
q(x,1,y)&=&\varphi_0(x)+\varphi_1(y)=e+ y=y\\
q(x,a,x)&=&\varphi_{1-a}(x)+\varphi_a(x)=\varphi_{1-a+a}(x)=\varphi_1(x)=x.
\end{eqnarray*}
Axiom \ref{space_cancel} is due to the fact that $\muu+\muu=1$ and consequently
\begin{eqnarray*}
\varphi_\muu(y_1)=\varphi_\muu(y_2)&\Rightarrow& \varphi_\muu(y_1)+\varphi_\muu(y_1)=\varphi_\muu(y_2)+\varphi_\muu(y_2)\\                                   &\Rightarrow& \varphi_1(y_1)=\varphi_1(y_2)\Rightarrow y_1=y_2.
\end{eqnarray*}
Next, we give a proof of Axiom \ref{space_homo}. It is relevant to notice that, besides other evident properties of the module $X$, the associativity of~$+$ plays an important part in the proof:
\begin{eqnarray*}
&&q(q(x,a,y),b,q(x,c,y))\\
&=&\varphi_{1-b}(\varphi_{1-a}(x)+\varphi_a(y))+\varphi_b(\varphi_{1-c}(x)+\varphi_c(y))\\
&=&\varphi_{1-b}(\varphi_{1-a}(x))+\varphi_{1-b}(\varphi_a(y))+\varphi_b(\varphi_{1-c}(x))+\varphi_b(\varphi_c(y))\\
&=&\varphi_{(1-b)(1-a)}(x)+\varphi_{b(1-c)}(x)+\varphi_{(1-b)a}(y)+\varphi_{bc}(y)\\
&=&\varphi_{1-a+ba-bc}(x)+\varphi_{a-ba+bc}(y)\\
&=&\varphi_{1-p(a,b,c)}(x)+\varphi_{p(a,b,c)}(y)\\
&=&q(x,p(a,b,c),y).
\end{eqnarray*}
It remains to prove (\ref{affine}):
\begin{eqnarray*}
&&q(q(x_1,a,y_1),\muu,q(x_2,a,y_2))\\
&=&\varphi_{\muu}(\varphi_{1-a}(x_1)+\varphi_a(y_1))+\varphi_\muu(\varphi_{1-a}(x_2)+\varphi_a(y_2))\\
&=&\varphi_{\muu}(\varphi_{1-a}(x_1)+\varphi_a(y_1)+\varphi_{(1-a)}(x_2)+\varphi_a(y_2))\\
&=&\varphi_{\muu}(\varphi_{1-a}(x_1+ x_2)+\varphi_a(y_1+ y_2))\\
&=&\varphi_{(1-a)\muu}(x_1+ x_2)+\varphi_{a\muu}(y_1+ y_2)\\
&=&\varphi_{(1-a)}(\varphi_\muu(x_1)+\varphi_\muu(x_2))+\varphi_a(\varphi_{\muu}(y_1)+\varphi_{\muu}(y_2))\\
&=&\varphi_{(1-a)}(q(x_1,\muu,x_2))+\varphi_{a}(q(y_1,\muu,y_2))\\
&=&q(q(x_1,\muu,x_2)),a,q(y_1,\muu,y_2)).
\end{eqnarray*}
\end{proof}

\begin{theorem}\label{mobi2module}
Consider a mobi space $(X,q)$, with a fixed chosen element $e\in X$, over a mobi algebra $(A,p,0,\muu,1)$. If $A$ contains $2$ such that $p(0,\muu,2)=1$ then $(X,+,e,\varphi)$ is a module over the unitary ring $(A,+,\cdot,0,1)$, with
\begin{eqnarray}
a+b&=&p(0,2,p(a,\muu,b))\\
a\cdot b&=&p(0,a,b)\\
\varphi_a(x)&=&q(e,a,x)\\
x+ y&=&q(e,2,q(x,\muu,y))=\varphi_2(q(x,\muu,y)).
\end{eqnarray}
\end{theorem}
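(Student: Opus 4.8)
The plan is to verify that the structure $(X,+,e,\varphi)$ defined by the given formulas constitutes a module over the unitary ring $(A,+,\cdot,0,1)$. This requires three separate verifications: first, that $(A,+,\cdot,0,1)$ is indeed a unitary ring; second, that $(X,+,e)$ is an abelian group; and third, that $\varphi\colon A\to End(X)$ is a ring homomorphism. The hypothesis that $A$ contains an element $2$ with $p(0,\muu,2)=1$ (equivalently $\muu\cdot 2=1$, so that $\muu$ is invertible) is what allows us to invoke the converse direction of the correspondence recalled at the start of the section: a mobi algebra in which $\muu$ is invertible is a unitary ring. I would therefore first cite Theorem 7.2 of \cite{mobi} (as was done in the proof of Proposition~\ref{prop_endX}) to conclude that $(A,+,\cdot,0,1)$ with the stated operations is a unitary ring, and I would record the key derived identities, namely $a+b=p(0,2,p(a,\muu,b))$ expands the ring addition in terms of $p$, and $\varphi_a(x)=q(e,a,x)$ identifies scalar action with the mobi operation based at $e$.

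Next I would establish that $(X,+,e)$ is an abelian group, where $x+y=q(e,2,q(x,\muu,y))=\varphi_2(q(x,\muu,y))$. Commutativity is immediate from property \ref{Y2}, which gives $q(x,\muu,y)=q(y,\muu,x)$. That $e$ is a neutral element should follow by computing $x+e=\varphi_2(q(x,\muu,e))$ and using the axioms together with the relation $\muu\cdot 2=1$ to collapse the expression to $x$; here I expect to use \ref{space_idem}, \ref{space_0}/\ref{space_1}, and the homomorphism axiom \ref{space_homo} to push the scalar $2$ through. The existence of inverses will require exhibiting, for each $x$, an element $-x$ with $x+(-x)=e$; the natural candidate is obtained by reflecting $x$ through $e$, and I would use \ref{Y1} (the identity $q(y,a,x)=q(x,\overline{a},y)$) to produce it. Associativity of $+$ is the step I expect to be the main obstacle: unravelling $(x+y)+z$ and $x+(y+z)$ both reduce to nested applications of $q$ with the scalar $2$ and $\muu$, and making them coincide will require the full strength of \ref{space_homo} together with the ring identities in $A$, and very possibly the affine/medial-type manipulations encoded in property \ref{Y7} and property \ref{Y5}. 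This is where the interplay between the algebra $A$ and the space $X$ is genuinely used, rather than just formal substitution.

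Finally I would check that $\varphi$ is a ring homomorphism, i.e. that $\varphi_1=\mathrm{id}_X$, $\varphi_0$ is the constant map at $e$, $\varphi_{a+b}=\varphi_a+\varphi_b$ (pointwise in the group $X$), and $\varphi_{a\cdot b}=\varphi_a\circ\varphi_b$. The unit and zero conditions are immediate from \ref{space_1}, \ref{space_0} and \ref{space_idem} applied with base point $e$. Multiplicativity $\varphi_{a\cdot b}(x)=q(e,p(0,a,b),x)=q(e,a,q(e,b,x))=\varphi_a(\varphi_b(x))$ is exactly property \ref{Y3}, so that step is essentially free. Additivity $\varphi_{a+b}(x)=\varphi_a(x)+\varphi_b(x)$ is the most delicate of the three: expanding the left side via $a+b=p(0,2,p(a,\muu,b))$ and the right side via the definition of $+$ on $X$, both sides should reduce to $q(e,2,q(q(e,a,x),\muu,q(e,b,x)))$ after applying \ref{space_homo} to combine $q(e,a,x)$ and $q(e,b,x)$, and here property \ref{Y7} (which states $q(q(x,a,y),\muu,q(x,b,y))=q(x,a\oplus b,y)$) is precisely the tool that matches the two computations. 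I would close by noting that the compatibility of the group structure on $X$ with the scalar action—the module axioms relating $+$, $\cdot$ and $\varphi$—follows by assembling these homomorphism properties, which is the natural converse to Theorem~\ref{module2mobi}.
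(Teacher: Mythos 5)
There is a genuine gap, and it is located exactly where you predicted the main obstacle would be. You plan to prove associativity of $+$ (and, implicitly, the additivity $\varphi_a(x+y)=\varphi_a(x)+\varphi_a(y)$, which you fold into the final "assembling" step) using only the mobi space axioms together with derived properties such as \ref{Y7} and \ref{Y5}. This cannot work. All of the properties in Proposition \ref{properties_space} concern nested applications of $q$ with the \emph{same} pair of base points $x,y$; they are all instances of \ref{space_homo}. Associativity of $x+y=q(e,2,q(x,\muu,y))$ forces you to compare expressions such as $q(q(x,\muu,y),\muu,q(e,\muu,z))$ and $q(q(x,\muu,e),\muu,q(y,\muu,z))$, whose inner occurrences of $q$ have \emph{different} base points, and no axiom of Definition \ref{mobi_space} lets you exchange them. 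The identity actually needed is the medial-type condition (\ref{affine}) (equivalently (\ref{eq: affine mobi space})), which the paper deliberately excludes from the axioms because it fails for general geodesic spaces. The paper's own proof states explicitly that (\ref{affine}) "is essential to prove the associativity of the operation $+$", and it uses (\ref{affine}) a second time, with $x_1=x_2=e$, to get $q(e,a,q(x,\muu,y))=q(q(e,a,x),\muu,q(e,a,y))$, i.e.\ that each $\varphi_a$ is a group endomorphism. The gap is not merely a missing computation: the statement is false for non-affine mobi spaces. The paper closes with Example \ref{ex5to9}(\ref{ex8}), a mobi space over $[0,1]$ extendable to $\mathbb{R}$, for which the induced $+$ is commutative with identity and inverses but \emph{not} associative. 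So your proof must assume $(X,q)$ affine (as Proposition \ref{mobi2mobi} and the surrounding discussion confirm is intended) and invoke (\ref{affine}) at these two points.

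The rest of your outline matches the paper's route: the ring structure on $A$ comes from the cited correspondence in \cite{mobi} (the paper cites Theorem 7.1 there, the mobi-to-ring direction, rather than 7.2); commutativity of $+$ is \ref{Y2}; the inverse is the reflection $-x=q(e,p(1,2,0),x)=\varphi_{\overline{2}}(x)$, as you guessed; $\varphi_{a\cdot b}=\varphi_a\circ\varphi_b$ is \ref{Y3}; and $\varphi_{a+b}=\varphi_a+\varphi_b$ does follow from \ref{space_homo}/\ref{Y7} together with the identity $p(0,\muu,a+b)=p(a,\muu,b)$ in $A$, since there the base points $e,x$ coincide. One further technical point the paper relies on and you should make explicit: all the group identities are first established in the form $q(e,\muu,u)=q(e,\muu,v)$ and then cancelled via \ref{space_cancel}, using the preliminary observation $q(e,\muu,x+y)=q(x,\muu,y)$ obtained from \ref{Y3} and $\muu\cdot 2=1$.
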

\begin{proof}
$(A,+,\cdot,0,1)$ is a unitary ring by Theorem 7.1 of \cite{mobi}. We prove here that $(X,+,e,\varphi)$ is a module over $A$. First, we observe that, using in particular \ref{Y3} of Proposition \ref{properties_space}, we have:
\begin{eqnarray*}
q(e,\muu,x+y)&=& q(e,\muu,q(e,2,q(x,\muu,y)))\\
&=& q(e,\muu\cdot 2,q(x,\muu,y))\\
                            &=& q(e,1,q(x,\muu,y))\\
														&=& q(x,\muu,y).
\end{eqnarray*}
Then, the property (\ref{affine}) of an affine mobi space is essential to prove the associativity of the operation $+$ of the module:
\begin{eqnarray*}
q(e,\muu,q(e,\muu,(x+ y)+ z))&=&q(q(e,\muu,e),\muu,q(x+ y,\muu,z))\\
&=& q(q(e,\muu,x+ y),\muu,q(e,\muu,z))\\
&=& q(q(x,\muu,y),\muu,q(e,\muu,z))\\
&=& q(q(x,\muu,e),\muu,q(y,\muu,z))\\
&=& q(q(e,\muu,x),\muu,q(e,\muu,y+ z))\\
&=& q(q(e,\muu,e),\muu,q(x,\muu,y+ z))\\
&=& q(e,\muu,q(e,\muu,x+(y+ z)))\\
\end{eqnarray*}
Which, by \ref{space_cancel}, implies that $(x+ y)+ z=x+(y+ z)$. Commutativity of $+$ and the identity nature of $e$ are easily proved:
\begin{eqnarray*}
q(e,\muu,e+ x)&=&q(e,\muu,x)\Rightarrow e+ x=x\\
q(e,\muu,x+ y)&=&q(x,\muu,y)=q(y,\muu,x)\\
                    &=&q(e,\muu,y+ x)\Rightarrow x+ y=y+ x.
\end{eqnarray*}
Cancellation is achieved with $-x=q(e,p(1,2,0),x)$. Indeed:
\begin{eqnarray*}
q(e,\muu,q(e,p(1,2,0),x)+ x)&=&q(q(e,p(1,2,0),x),\muu,x)\\
                                  &=&q(q(e,p(1,2,0),x),\muu,q(e,1,x))\\
                                  &=&q(e,p(p(1,2,0),\muu,1),x)\\
                                  &=&q(e,p(1,p(2,\muu,0),0),x)\\
                                  &=&q(e,p(1,1,0),x)\\
                                  &=&q(e,0,x)=e=q(e,\muu,e)\\
\end{eqnarray*}
To prove that $\varphi_a(x+y)=\varphi_a(x)+\varphi_a(y)$, we will again need (\ref{affine}):
\begin{eqnarray*}
q(e,\muu,\varphi_a(x+y))&=& q(e,\muu,q(e,a,x+ y))\\    
														   &=& q(q(e,a,e),\muu,q(e,a,x+ y))\\   
														   &=& q(q(e,\muu,e),a,q(e,\muu,x+ y))\\   
														   &=& q(e,a,q(x,\muu,y))\\ 
														   &=& q(q(e,a,x),\muu,q(e,a,y))\\  
														   &=& q(\varphi_a(x),\muu,\varphi_a(y))\\ 
														   &=& q(e,\muu,\varphi_a(x)+\varphi_a(y)).
\end{eqnarray*}
To prove that $\varphi_{a+b}(x)=\varphi_a(x)+\varphi_b(x)$, let us first recall that, in a mobi algebra with 2 and $a+b=p(0,2,p(a,\muu,b))$, we have the following property:
$$p(0,\muu,a+b)=p(a,\muu,b).$$
We then have
\begin{eqnarray*}
q(e,\muu,\varphi_{a+b}(x))&=& q(e,\muu,q(e,a+b,x))\\    
													&=& q(q(e,0,x),\muu,q(e,a+b,x))\\    
													&=& q(e,p(0,\muu,a+b),x)\\    
													&=& q(e,p(a,\muu,b),x)\\    
													&=& q(q(e,a,x),\muu,q(e,b,x))\\    
													&=& q(\varphi_a(x),\muu,\varphi_b(x))\\
													&=& q(e,\muu,\varphi_a(x)+\varphi_b(x)).
\end{eqnarray*}
Last two properties are easily proved:
$$\varphi_{a\cdot b}(x)=q(e,a\cdot b,x)=q(e,a,q(e,b,x))=\varphi_a(\varphi_b(x))$$
$$\varphi_1(x)=q(e,1,x)=x.$$
\end{proof}

\begin{proposition}\label{module2module}
Consider a R-module $(X,+,e,\varphi)$ within the conditions of Theorem \ref{module2mobi} and the corresponding mobi space $(X,q)$. Then the R-module obtained from $(X,q)$ by Theorem \ref{mobi2module} is the same as $(X,+,e,\varphi)$.
\end{proposition}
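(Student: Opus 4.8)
The plan is to verify directly that the round-trip reproduces each ingredient of the original module: the ring operations on $A$, the abelian group structure $(X,+,e)$, and the action $\varphi$. Throughout I take the distinguished element required by Theorem~\ref{mobi2module} to be the zero $e$ of the original group, so that every $\varphi_c$ fixes $e$. Since both theorems are purely equational, the verification amounts to substituting the formulas of Theorem~\ref{module2mobi} into those of Theorem~\ref{mobi2module} and simplifying, using only that $\varphi$ is a ring homomorphism (each $\varphi_c$ fixes $e$, $\varphi_{ab}=\varphi_a\varphi_b$, $\varphi_1=\mathrm{id}$) together with $\muu+\muu=1$.

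First I would pin down the element $2$ whose existence Theorem~\ref{mobi2module} presupposes. In the mobi algebra produced in Theorem~\ref{module2mobi} one has $p(0,\muu,c)=\muu c$, so $2:=1+1$ satisfies $p(0,\muu,2)=\muu(1+1)=1$, and it is the unique such element by the cancellation axiom~\ref{alg_cancel}. I would then recover the ring: from $p(a,b,c)=a+bc-ba$ one reads off $a\cdot b=p(0,a,b)=ab$ at once, while $a+_{\mathrm{new}}b=p(0,2,p(a,\muu,b))=2\,(a+\muu b-\muu a)=a+b$, using $2\muu=1$. Hence the reconstructed ring is exactly $(A,+,\cdot,0,1)$.

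Next I would treat the action and the addition on $X$. For the action, $\varphi'_a(x)=q(e,a,x)=\varphi_{1-a}(e)+\varphi_a(x)=\varphi_a(x)$, since $\varphi_{1-a}$ fixes $e$; thus $\varphi'=\varphi$. For the addition, I would first compute $q(x,\muu,y)=\varphi_{1-\muu}(x)+\varphi_\muu(y)=\varphi_\muu(x)+\varphi_\muu(y)$, using $1-\muu=\muu$, and then $q(e,2,z)=\varphi_{1-2}(e)+\varphi_2(z)=\varphi_2(z)$, since $\varphi_{1-2}$ fixes $e$. Combining these and invoking the additivity of $\varphi_2$ together with $2\muu=1$ yields $x+'y=\varphi_2\big(\varphi_\muu(x)+\varphi_\muu(y)\big)=\varphi_{2\muu}(x)+\varphi_{2\muu}(y)=x+y$.

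There is no genuine obstacle here: the statement is a bookkeeping verification in which every step is a one-line substitution. The only points requiring a little care are the correct identification of $2$ as $1+1$ (so that the hypothesis $p(0,\muu,2)=1$ of Theorem~\ref{mobi2module} is actually met in the mobi algebra coming from $A$) and the systematic use of $\varphi_c(e)=e$, which is precisely what collapses each $q(e,\cdot,\cdot)$ expression to a single $\varphi$-term and makes the reconstructed group addition and action agree with the originals.
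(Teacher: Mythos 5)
Your proof is correct and follows essentially the same route as the paper's: both reduce $x+'y$ to $\varphi_2\bigl(\varphi_{\muu}(x)+\varphi_{\muu}(y)\bigr)=\varphi_{2\cdot\muu}(x)+\varphi_{2\cdot\muu}(y)=x+y$ and collapse $\varphi'_a(x)=\varphi_{1-a}(e)+\varphi_a(x)$ to $\varphi_a(x)$. Your additional checks (identifying $2=1+1$ so that the hypothesis $p(0,\muu,2)=1$ of Theorem~\ref{mobi2module} is met, and verifying that the ring operations on $A$ are recovered) are sound and slightly more thorough than the paper's, but do not change the substance of the argument.
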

\begin{proof}
From $(X,q)$, we define
$$x+'y=q(e,2,q(x,\muu,y))\ \textrm{and}\ \varphi'(x)=q(e,a,x)$$
and obtain the following equalities:
\begin{eqnarray*}
x+'y&=&e+\varphi_2(q(x,\muu,y))\\
    &=&\varphi_2(\varphi_\muu(x)+\varphi_\muu(y))\\
		&=&\varphi_{2\cdot\muu}(x)+\varphi_{2\cdot\muu}(y)\\
		&=&x+y
\end{eqnarray*}
$$\varphi'_a(x)=\varphi_{1-a}(e)+\varphi_a(x)=e+\varphi_a(x)=\varphi_a(x).$$
\end{proof}

\begin{proposition}\label{mobi2mobi}
Consider an affine mobi space $(X,q)$ within the conditions of Theorem \ref{mobi2module} and the corresponding module $(X,+,e,\varphi)$. Then the affine mobi space obtained from $(X,+,e,\varphi)$ by Theorem \ref{module2mobi} is the same as $(X,q)$.
\end{proposition}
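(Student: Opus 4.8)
The plan is to prove that the reconstructed ternary operation, say $q'$, coincides with the original $q$. Unwinding the two theorems, Theorem \ref{module2mobi} applied to the module $(X,+,e,\varphi)$ of Theorem \ref{mobi2module} gives $q'(x,a,y)=\varphi_{1-a}(x)+\varphi_a(y)$, where by construction $\varphi_a(x)=q(e,a,x)$ and $u+v=q(e,2,q(u,\muu,v))$. The first preliminary step is to identify the ring element $1-a$ with the mobi-algebra complement $\overline{a}$. Using the definition of the ring sum, $a+\overline{a}=p(0,2,p(a,\muu,\overline{a}))$; since $p(a,\muu,\overline{a})=p(\overline{a},\muu,a)=\muu$ by (\ref{B7}) (and commutativity of the midpoint operation $\oplus$), and since $p(0,2,\muu)=2\cdot\muu=p(0,\muu,2)=1$ by (\ref{B130}) together with the hypothesis $p(0,\muu,2)=1$, the abelian group structure of the ring yields $1-a=\overline{a}$. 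Hence $q'(x,a,y)=q(e,\overline{a},x)+q(e,a,y)$.

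The main idea is then to avoid manipulating the scalar $2$ directly and instead apply the injective map $q(e,\muu,-)$ to both $q(x,a,y)$ and $q'(x,a,y)$, concluding by the cancellation axiom \ref{space_cancel}. On the $q'$ side I would first use the relation $q(e,\muu,u+v)=q(u,\muu,v)$ established in the proof of Theorem \ref{mobi2module} to turn the sum back into a midpoint, obtaining $q(q(e,\overline{a},x),\muu,q(e,a,y))$. Rewriting $q(e,\overline{a},x)=q(x,a,e)$ via \ref{Y1}, applying the affine identity (\ref{affine}) with parameters $x_1=x$, $y_1=e$, $x_2=e$, $y_2=y$, and finally using \ref{Y2} to swap $q(x,\muu,e)=q(e,\muu,x)$, this collapses to the expression $q(q(e,\muu,x),a,q(e,\muu,y))$.

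On the $q$ side I would start from $q(e,\muu,q(x,a,y))$, rewrite the first argument as $e=q(e,a,e)$ using idempotency \ref{space_idem}, and apply the affine identity (\ref{affine}) with $x_1=e$, $y_1=e$, $x_2=x$, $y_2=y$ to reach the very same expression $q(q(e,\muu,x),a,q(e,\muu,y))$. Since both sides agree after applying $q(e,\muu,-)$, cancellation \ref{space_cancel} gives $q(x,a,y)=q'(x,a,y)$, which is the claim.

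The step I expect to be the genuine obstacle is the opening identification $1-a=\overline{a}$ in the reconstructed ring, since it is the only place where the hypothesis $p(0,\muu,2)=1$ and the two-sidedness of the inverse of $2$ (namely $2\cdot\muu=\muu\cdot 2=1$, which follows from (\ref{B130})) really enter. Once $\overline{a}$ replaces $1-a$, everything else is a short symmetric reduction of the two sides to a common expression, powered entirely by the affine axiom (\ref{affine}), the complement rule \ref{Y1}, the midpoint symmetry \ref{Y2}, and the addition-to-midpoint formula recorded in Theorem \ref{mobi2module}.
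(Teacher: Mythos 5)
Your proof is correct, and it takes a somewhat different (and arguably more economical) route than the paper's. The paper argues forward from $q'(x,a,y)=q(x,a,e)+q(e,a,y)=q(e,2,q(q(x,a,e),\muu,q(e,a,y)))$ and then pushes the scalar $2$ across the outer $q$, i.e.\ it invokes the exchange law $q(q(x_1,a,y_1),2,q(x_2,a,y_2))=q(q(x_1,2,x_2),a,q(y_1,2,y_2))$, which is an instance of the stronger identity (\ref{eq: commutative affine mobi space}) rather than of the assumed affine axiom (\ref{affine}) (where the middle scalar is $\muu$). You avoid this entirely: by applying the injective map $q(e,\muu,-)$ to both $q(x,a,y)$ and $q'(x,a,y)$, converting the sum back to a midpoint via $q(e,\muu,u+v)=q(u,\muu,v)$, and reducing both sides to the common expression $q(q(e,\muu,x),a,q(e,\muu,y))$, you only ever use (\ref{affine}) with $\muu$ in the middle slot, together with \ref{Y1}, \ref{Y2}, \ref{space_idem} and cancellation \ref{space_cancel}. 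Each of these steps checks out, and your explicit verification that $\overline{a}=1-a$ in the reconstructed ring (via $a+\overline{a}=p(0,2,p(a,\muu,\overline{a}))=p(0,2,\muu)=2\cdot\muu=\muu\cdot 2=1$) makes precise a point the paper leaves implicit in the first line of its computation. What your approach buys is that the proposition is established strictly under the stated hypotheses; what the paper's direct computation buys is a shorter chain of equalities, at the cost of tacitly using the exchange with $2$.
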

\begin{proof}
From $(X,+,e,\varphi)$, we define
$$ q'(x,a,y)=\varphi_{1-a}(x)+\varphi_a(y)$$
and obtain the following equalities:
\begin{eqnarray*}
q'(x,a,y)&=& q(e,\overline{a},x)+q(e,a,y)\\
         &=& q(x,a,e)+q(e,a,y)\\
         &=& q(e,2,q(q(x,a,e),\muu,q(e,a,y))).
\end{eqnarray*}
Now, because we are considering that $(X,q)$ is affine, we get:
\begin{eqnarray*}
q'(x,a,y)&=& q(q(e,a,e),2,q(q(x,\muu,e),a,q(e,\muu,y)))\\
         &=& q(q(e,2,q(e,\muu,x)),a,q(e,2,q(e,\muu,y)))\\
         &=& q(q(e,2\cdot \muu,x),a,q(e,2\cdot\muu,y))\\
				 &=& q(x,a,y).
\end{eqnarray*}
\end{proof}

We have completely characterized affine mobi spaces in terms of modules over a unitary ring in which 2 is invertible. In a sequel to this paper we will investigate how to characterize mobi spaces in terms of  homomorphisms between mobi algebras. We will also dedicate our attention to the case of non affine mobi spaces.

\vspace{0.5cm}

We finish this section by taking a closer look to Example \ref{ex11} and Example \ref{ex5to9}(\ref{ex8}).

 Example \ref{ex11} is an example of an affine mobi space and can be extended to the case where the underlying mobi algebra is $(\mathbb{R},p,0,\frac{1}{2},1)$. Then by Theorem \ref{mobi2module} we get a module over $(\R,+,\cdot,0,1)$ given by $(\R^2,+,(0,0),\varphi)$ with
\begin{eqnarray*}
(x_1,x_2)+(y_1,y_2)&=&(x_1+y_1-2 k x_2 y_2,x_2+y_2)\\
\varphi_a(x_1,x_2)&=&(a x_1+k (1-a) a x_2^2,a x_2).
\end{eqnarray*}
By Theorem \ref{module2mobi}, we can construct a mobi space from this module and verify that it is the same as Example \ref{ex11}. Of course, in this example, the module is a vector field and there is a homomorphism, namely
$$f(x_1,x_2)=(x_1+k (x_2^2-x_2),x_2)$$
from $(\R^2,+,(0,0),\varphi)$ to the usual vector fied in $\R^2$.

Example \ref{ex5to9}(\ref{ex8}) is a mobi space that it is not affine. Indeed (\ref{affine}) is not always verified as, for instance, we have:
$$q(q((0,0),\frac{1}{3},(1,0)),\frac{1}{2},q((1,1),\frac{1}{3},(0,0)))=(\frac{1}{2},\frac{19}{189})$$
while
$$q(q((0,0),\frac{1}{2},(1,1)),\frac{1}{3},q((1,0),\frac{1}{2},(0,0)))=(\frac{1}{2},\frac{1}{12}).$$
We can also extend its underlying mobi algebra to $(\mathbb{R},p,0,\frac{1}{2},1)$. Theorem \ref{mobi2module}  cannot be applied but we can still find the corresponding $+$ operation and $\varphi_a$ functions. Choosing $e=(0,0)$, the results are the following:
\begin{eqnarray*}
&&(x_1,x_2)+(y_1,y_2)\\
&=&\left(x_1+y_1,\dfrac{(x_1^2+4x_1 y_1+7 y_1^2) x_2+(7x_1^2+4x_1y_1+y_1^2) y_2}{x_1^2+x_1y_1+y_1^2}\right)
\end{eqnarray*}
for $( x_1,y_1)\neq(0,0)$ and 
$$(0,x_2)+(0,y_2)=(0,4x_2+4y_2);$$
$$\varphi_a(x_1,x_2)=(a x_1,a^3 x_2).$$
The operation $+$ is commutative, admits $e$ as a identity and the symmetric element $(-x_1,-x_2)$ for all $(x_1,x_2)\in \R^2$ but it is not associative. On the other side, we have:
\begin{eqnarray*}
\varphi_a(x+y)&=&\varphi_{a}(x)+\varphi_{a}(y)\\
\varphi_{a+b}(x)&=&\varphi_{a}(x)+\varphi_{b}(x)\\
\varphi_{a b}(x)&=&\varphi_a(\varphi_b(x))\\
\varphi_{1}(x)&=&x\\
\varphi_{0}(x)&=&e\\
\end{eqnarray*}
Beginning with the structure $(X,+,e,\varphi)$, we can construct a ternary operation $q$ on $X$ given by (\ref{q}) but then $(X,q)$ is not a mobi space because it doesn't verify axiom \ref{space_homo}. This example shows that a mobi space is richer than a structure of the type $(X,+,\varphi)$.

\end{document}